\renewcommand{\epsilon}{\varepsilon}
\numberwithin{equation}{section}
\newtheoremstyle{thmlemcorr}{10pt}{10pt}{\itshape}{}{\bfseries}{.}{10pt}{{\thmname{#1}\thmnumber{ #2}\thmnote{ (#3)}}}
\newtheoremstyle{thmlemcorr*}{10pt}{10pt}{\itshape}{}{\bfseries}{.}\newline{{\thmname{#1}\thmnumber{ #2}\thmnote{ (#3)}}}
\newtheoremstyle{defi}{10pt}{10pt}{\itshape}{}{\bfseries}{.}{10pt}{{\thmname{#1}\thmnumber{ #2}\thmnote{ (#3)}}}
\newtheoremstyle{remexample}{10pt}{10pt}{}{}{\bfseries}{.}{10pt}{{\thmname{#1}\thmnumber{ #2}\thmnote{ (#3)}}}
\newtheoremstyle{ass}{10pt}{10pt}{}{}{\bfseries}{.}{10pt}{{\thmname{#1}\thmnumber{ A#2}\thmnote{ (#3)}}}
\theoremstyle{thmlemcorr}
\newtheorem{theorem}{Theorem}
\numberwithin{theorem}{section}
\newtheorem{lemma}[theorem]{Lemma}
\newtheorem{corollary}[theorem]{Corollary}
\newtheorem{proposition}[theorem]{Proposition}
\theoremstyle{thmlemcorr*}
\newtheorem{theorem*}{Theorem}
\newtheorem{lemma*}[theorem]{Lemma}
\newtheorem{corollary*}[theorem]{Corollary}
\newtheorem{proposition*}[theorem]{Proposition}
\newtheorem{problem*}[theorem]{Problem}
\newtheorem{conjecture*}[theorem]{Conjecture}
\theoremstyle{defi}
\newtheorem{definition}[theorem]{Definition}
\theoremstyle{remexample}
\newtheorem{remark}[theorem]{Remark}
\newtheorem{example}[theorem]{Example}
\theoremstyle{ass}
\newcommand{\Bcal}{\mathcal{B}}
\newcommand{\Ccal}{\mathcal{C}}
\newcommand{\Dcal}{\mathcal{D}}
\newcommand{\Fcal}{\mathcal{F}}
\newcommand{\Gcal}{\mathcal{G}}
\newcommand{\Hcal}{\mathcal{H}}
\newcommand{\Mcal}{\mathcal{M}}
\newcommand{\Ncal}{\mathcal{N}}
\newcommand{\Scal}{\mathcal{S}}
\newcommand{\Tcal}{\mathcal{T}}
\DeclareMathOperator{\dist}{dist}
\DeclareMathOperator{\supp}{supp}
\newcommand{\abs}[1]{|#1|}
\newcommand{\dd}{\;\mathrm{d}}
\newcommand{\N}{\mathbb{N}}
\newcommand{\R}{\mathbb{R}}
\newcommand{\ONE}{\mathbbm{1}}
\newcommand{\weaklystar}{\overset{*}\rightharpoonup}
\newcommand{\eps}{\epsilon}
\newcommand{\bdpoly}{\partial(\Omega,R_\ast)}
\newcommand{\bdperp}{\partial_{\perp}(\Omega, R_\ast)}
\def\XXint#1#2#3{{\setbox0=\hbox{$#1{#2#3}{\int}$}
\vcenter{\hbox{$#2#3$}}\kern-.5\wd0}}
\newcommand\restrict[1]{\raisebox{-.5ex}{$|$}_{#1}}
\DeclareMathOperator{\SO}{SO}
\DeclareMathOperator{\Sl}{Sl}
\DeclareMathOperator{\Id}{Id}
\DeclareMathOperator{\Diss}{Diss}
\newcommand{\ffi}{\varphi}
\newcommand{\rc}{{\rm rc}}
\newcommand{\qc}{{\rm qc}}
\newcommand{\pc}{{\rm pc}}
\title[Analysis of polycrystalline single-slip crystal plasticity]{On the interplay of anisotropy and geometry for polycrystals in single-slip crystal plasticity}
\author{Dominik Engl}
\address{Mathematical Institute, Utrecht University, Postbus 80010, 3508 TA Utrecht, The Netherlands}
\email{d.m.engl@uu.nl}
\author{Carolin Kreisbeck}
\address{Mathematisch-Geographische Fakult\"at, Katholische Universit\"at Eichst\"att-Ingolstadt, Osten\-stra{\ss}e 28, 85072 Eichst\"att}
\email{carolin.kreisbeck@ku.de}
\begin{document}

%%%%%%%%%%%%%%%%%%%%%%%%%%%% ABSTRACT %%%%%%%%%%%%%%%%%%%%%%%%%%%%%%%%%%%
\maketitle

\begin{abstract}  
\vspace{-12pt}
In this paper, we investigate a variational polycrystalline model in finite crystal plasticity with one active slip system and rigid elasticity.
The task is to determine inner and outer bounds on the domain of the constrained macroscopic elastoplastic energy density, or equivalently, the affine boundary values of a related inhomogeneous differential inclusion problem. 
A geometry-independent Taylor inner bound, which we calculate directly, follows from considering constant-strain solutions to a relaxed problem in combination with well-known relaxation and convex integration results.
On the other hand, we deduce outer bounds from a rank-one compatibility condition between the affine boundary data and the microscopic strain at the boundary grains.
While there are examples of polycrystals for which the two above-mentioned bounds coincide, we present an explicit construction to prove that the Taylor bound is non-optimal in general. 
\bigskip

\thispagestyle{empty}

\noindent\textsc{MSC(2020):} 35R70, 49K45, 
74C15\smallskip
%74C15 Large-strain, rate-independent theories of plasticity (including nonlinear plasticity)
%49J45 Methods involving semicontinuity and convergence; relaxation
%35R70 PDEs with multivalued right-hand sides

\noindent\textsc{Keywords:} inhomogeneous differential inclusions, relaxation, microstructure, 
rank-one compatibility, polycrystals, large-strain plasticity \smallskip

\noindent\textsc{Date:} \today.
\end{abstract}

%%%%%%%%%%%%%%%%%%%%%%%%%%%% MAIN PART %%%%%%%%%%%%%%%%%%%%%%%%%%%
\section{Introduction}

Most elastoplastic solids are polycrystalline, meaning that they consist of rotated copies of single crystals, called grains, which form patterns on a mesoscopic length scale in between the micro- and the macroscopic one. The grain structures impose restrictions on still finer substructures and highly influence the effective material response of the solid. 
In this paper, we contribute to the analysis of the attainable macroscopic strains of polycrystals, focusing on a model of crystal plasticity with one active slip-system and rigid elasticity.  
The overall goal is to obtain a deeper understanding of boundary interaction, global compatibility, and the interplay between slip mechanisms and texture in a geometrically nonlinear setting.

Our approach originates in the time-discrete variational framework introduced in~\cite{CHM02, MSL02, OrR99} for modeling rate-independent processes arising in single-crystal finite plasticity. 
The first relaxation results for such problems, facilitating the description of the effective material behavior by minimizing out microstructural effects, go back to Conti \& Theil~\cite{Con06, CoT05}. They determine the quasiconvexification of the elastoplastic energy density for the first time-incremental problem under the assumptions of one single slip system and elastically rigid behavior.
As shown in~\cite{CDK11}, the relaxed energies of~\cite{Con06} result via approximation by $\Gamma$-convergence from models with elastic energy in the limit of diverging elastic constants.
For more recent work on relaxation in models with two or more slip systems, we refer to~\cite{CoD16, CDK13a, Sch13}; see also~\cite{ADK18, AnD15} for related studies in the context of strain-gradient plasticity.

Compared with the single-crystal case, the analysis of polycrystals holds additional challenges related to the geometry and orientation of the different grains, as well as to the compatibility of micro\-structures across grain boundaries; in the context of linear and nonlinear elasticity, the latter has been studied in~\cite{Bha03,BaK97} and~\cite{BaC15, BaC17}, respectively.  
The description of polycrystal geometry in~\cite{BaC15} by Ball \& Carstensen in combination with the modeling of~\cite{Con06, CoT05} constitutes the basis for our framework of
polycrystalline finite crystal plasticity with one active slip system under the assumption of elastically nonlinear but rigid behavior; the detailed setup is given in Section~\ref{sec:setup}. 
Let us remark that the stress is not well-defined in this elastically rigid strain-based setting; for an analysis of a stress-based formulation of polycrystal perfect plasticity, see~\cite{KoL99}.
Mathematically, the macroscopically attainable strains can be described via a specific inhomogeneous nonlinear differential inclusion subject to affine boundary conditions, similar to the approaches in polycrystalline shape-memory materials \cite{BaK97,KoN00}, where stress-free strains are studied based on elastic energy minimization.
The general theory of differential inclusions  (or multi-valued differential equations) has been an active field of research over the last decades with strong methods and results, which are especially rich in the homogeneous case, see, e.g.,~\cite{Mue99, Syc06, Syc11} and the references therein. In the inhomogeneous case, where the target sets feature spatial dependence, we refer to~\cite{MuS01} for the existence of Lipschitz solutions, and to~\cite{MaS21} for a recent generalization of the latter in the context of Sobolev solutions. 
The techniques of~\cite{KRW15} allow, among others, the construction of Sobolev functions with prescribed Jacobians under a so-called uniform tight containedness assumption. 
However, there is - to the best of our knowledge - currently no available abstract methodology that is able to accommodate the particular setting of this paper - even though, considering related homogeneous inclusions does provide some partial insight.

This work addresses different aspects related to the solvability of inhomogeneous differential inclusions used to describe the macroscopic deformation behavior of elastoplastic solids.
We first identify a simple geometry-independent sufficient condition by combining a new characterization of globally affine solutions to a relaxed version of the problem with well-known relaxation and convex integration results~\cite{CoT05,MuS99}. 
On the other hand, necessary conditions are due to compatibility constraints following from a generalized Hadamard jump condition~\cite{BaC, IVV02} applied to the boundary grains. 
While the sufficient and necessary conditions turn out to provide a characterization for specific polycrystals, we show that they do not coincide in general though.  
The argument is based on an explicit construction of finitely piecewise affine maps that satisfy fixed boundary conditions and incompressibility, and is as such known to be a delicate issue. Here, we take the geometric setup of the rotated-square construction in \cite{CKZ17, Pom10} as inspiration for a suitable sheared-square construction. A more detailed overview of our findings is given in Section~\ref{sec:overview}.

\subsection{Setup of the problem}\label{sec:setup}
In the following, we describe our two-dimensional model for single-slip polycrystal plasticity. 
Starting from the theory of finite plasticity for single-crystalline structures (cf.~\cite{CHM02, MSL02, Mie07, OrR99}), we adopt a geometrically nonlinear model for the deformation behavior of an elastoplastic body where the deformation gradient 
\begin{align}\label{decomposition}
	\nabla u=F=F_{\rm el}F_{\rm pl}
\end{align} is split multiplicatively into an elastic part $F_{\rm el}$ and a plastic one $F_{\rm pl}$, as proposed in~\cite{Kro60, Lee69}. For recent discussions about this decomposition and possible alternative modeling approaches, see~\cite{CDOR18, DaF15, Del18}.
With~\eqref{decomposition} at hand, the elastoplastic energy is given in terms of the condensed energy density  
\begin{align}\label{densityW}
	W(F) = \min_{F=F_{\rm el}F_{\rm pl}}\bigr( W_{\rm el}(F_{\rm el}) + W_{\rm pl}(F_{\rm pl}) + \Diss(F_{\rm pl})\bigr),
\end{align}
where $W_{\rm el}$ is the elastic energy contribution, $W_{\rm pl}$ represents the plastic potential, and $\Diss$ encodes dissipative effects.
We invoke a setting of rigid elasticity,
meaning that the elastic parts $F_{\rm el}$ are contained in the set of rotations $\SO(2)$ almost everywhere and do not contribute to the energy, i.e.,
\begin{align*}
	W_{\rm el}(F_{\rm el})= \begin{cases}
				0 &\text{ if } F_{\rm el}\in\SO(2),\\
				\infty & \text{ otherwise.}
			\end{cases}
\end{align*}
As the plastic strain in the context of single-slip crystal plasticity is a simple shear along one active slip system, determined by a slip direction $s\in\Scal^1$ and slip-plane normal $m=s^\perp$, one has that $F_{\rm pl} = \Id + \gamma s \otimes m$,
where $\gamma\in\R$ quantifies the plastic slip, and~\eqref{densityW} becomes 
\begin{align}\label{Whomogenous}
	W(F) = 	\begin{cases}
				(|Fm|^2 - 1)^{\frac{p}{2}} = |\gamma|^p &\text{ if } F\in\Mcal_s,\\
				\infty &\text{ otherwise,}
			\end{cases}\qquad F\in\R^{2\times 2},
\end{align}
for $p\geq 1$, cf.~\cite{Con06, CoT05}. Here, the choices $p=1$ and $p=2$ model dissipation and linear hardening, respectively, and 
\begin{align*}
	\Mcal_s:=\{F\in \R^{2\times 2} : \det F=1, |Fs|= 1\};
\end{align*}
we often use the short notation $\Mcal:=\Mcal_{e_1}$.

In our polycrystalline setting, the active slip direction within the body is location-dependent and determined by the orientations of the individual grains. 
To be more precise, let us first give a definition of the term ``polycrystal'', which is based on~\cite[Section 2]{BaC15} and tailored to our crystal plasticity model, see Figure \ref{fig:polycrystal} for illustration.
We say that a pair $(\Omega, R_\ast)$ with a reference configuration $\Omega\subset \R^2$ and a texture $R_\ast: \Omega\to \SO(2)$ is a (two-dimensional) polycrystal if these conditions are satisfied: 
\begin{itemize}
	\item $\Omega$ is a bounded Lipschitz and has a partition (up to a set of measure zero) into $N\in\N$ regular bounded Lipschitz domains $\Omega_1,\ldots,\Omega_N\subset \Omega$, called the grains of $(\Omega,R_\ast)$, that is,
	\begin{align*}
		\Omega={\rm int}\:\bigcup_{k=1}^N\overline{\Omega}_k\quad\text{ and }\quad{\rm int}\:\overline{\Omega}_k = \Omega_k \text{ for all $k\in\{1,\ldots,N\}$},
	\end{align*}
	where $\overline{(\cdot)}$ and ${\rm int}\: (\cdot)$ denote the closure and interior of a set; 
	
	\item $R_\ast : \Omega \to \SO(2)$ is constant on each $\Omega_k$ for $k\in\{1,\ldots,N\}$ with
	\begin{align}\label{twograins}
		R_\ast\restrict{\Omega_k}\neq \pm R_\ast\restrict{\Omega_l} \text{ if } \Hcal^1(\partial\Omega_k\cap\partial\Omega_l)>0 \text { for all }k\neq l \in \{1,\ldots,N\},
	\end{align}
	where $\Hcal^1$ is the one-dimensional Hausdorff-measure; otherwise, two grains can be merged into one.
	The image of $R_\ast$ is denoted by $R_\ast(\Omega)$. 
\end{itemize}
With the default choice of the slip direction $e_1$ on unrotated grains of the polycrystal $(\Omega,R_\ast)$, each grain $\Omega_k$ can be viewed as a single crystal with slip direction $R_\ast\restrict{\Omega_k}e_1$. 
The set of attainable microscopic strains on $\Omega_k$ is then given by
\begin{align*}
	\Mcal_{R_\ast\restrict{\Omega_k}e_1} = \Mcal R_\ast^T\restrict{\Omega_k}. 
\end{align*}
Note that polycrystals with exactly two grains differ intrinsically from single-crystals with two active slip systems subject to latent hardening, as studied in~\cite{CDK13a,Sch13}. This is because the slip directions are tied to the grain structure and may not be chosen arbitrarily at any point within the occupied region.

We distinguish between interior and boundary grains: the boundary of an interior grain $\Omega_k$ is contained in the boundary of others, i.e., $\partial \Omega_k\subset \bigcup_{l\neq k} \partial\Omega_l$; all remaining grains are called boundary grains. 
Furthermore, we introduce the points on $\partial \Omega$ where at least two boundary grains meet as the boundary dual points of $(\Omega, R_\ast)$, in formulas,
\begin{align}\label{bddual_def}
	\bigcup_{1\leq k<l\leq N} \partial \Omega_{k}\cap \partial\Omega_{l}\cap \partial \Omega.
\end{align}
With $\nu$ the outer unit normal of $\Omega$, we set
\begin{align}\label{bdpoly_def}
	\bdpoly:=\{x\in\partial\Omega : \text{$\nu(x)$ exists and $x$ is not a boundary dual point of $(\Omega, R_\ast)$}\}.
\end{align}

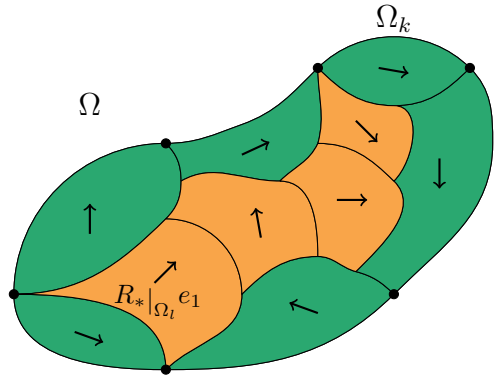
\begin{figure}[h!]
	\centering
	\begin{tikzpicture}
		\begin{scope}[scale=1]
			\draw (0,0) to [out=90,in=180] (2,2) to [out=0,in=200] (3,2.25) to [out=20,in=225] (4,3) to [out=45, in=135] (6,3) to [out=-45, in=90] (6.3,2) to [out=-90,in=45] (5,0) to [out=225,in=0] (2,-1) to [out=180,in=-90] (0,0); %Outer crystal
			\draw (1,2.25) node[anchor = south] {\Large{$\Omega$}};	
%					\draw [ thick, red ,->](3,2.25) --++ (110:1);
%					\draw ($(3,2.25) +(110:1)$) node [red, anchor=south ] {$n(x)$};	
			
			\draw [fill=ForestGreen!80!white](0,0) to [out=0,in=225] (2,1) to [out=45,in=-90] (2.2,1.5) to [out=90,in=-45] (2,2) to [out=180,in=90] (0,0); %Crystal 1, Boundary
			\draw [ thick, ->] (1,0.8) -- ++(0,0.4);
			\draw [fill=ForestGreen!80!white](0,0) to [out=0,in=90](2,-1) to [out=180,in=-90] (0,0); %Crystal 2, Boundary
			\draw [ thick, ->](0.8,-0.5) --++(-20:0.4);
			\draw [fill=ForestGreen!80!white](2.2,1.5) to [out=90,in=-45] (2,2) to [out=0,in=200] (3,2.25) to [out=20,in=225] (4,3) to [out=-90,in=75] (4,2) to [out=-115,in=45] (3.5,1.5) to [out=180,in=30] (2.2,1.5);%Crystal 4, Boundary
			\draw [ thick, ->] (3,1.85) -- ++(25:0.4);
			\draw [fill=ForestGreen!80!white](3,0) to [out=45,in=180] (4,0.5 )to [out=0,in=165] (4.5,0.3) to [out=-15,in=135] (5,0) to [out=225,in=0] (2,-1) to [out=45,in=-90] (3,0); %Crystal 6, Boundary
			\draw [ thick, ->] (4,-0.25) -- ++(160:0.4);
			\draw [fill=ForestGreen!80!white](6,3) to [out=-45, in=90] (6.3,2) to [out=-90,in=45] (5,0) to [out=135,in=-15] (4.5,0.3) to [out=45,in=-45] (5,1.5) to [out=45,in=0] (5,2.5) to [out=0,in=225] (6,3);%Crystal 9, Boundary
			\draw [ thick, ->] (5.6,1.8) --++(-90:0.4);
			\draw [fill=ForestGreen!80!white](4,3) to [out=45, in=135] (6,3) to [out=225,in=0] (5,2.5) to [out=180,in=-45] (4,3); %Crystal 10, Boundary
			\draw [ thick, ->] (4.8,3) --++(-10:0.4);
			\draw (5,3.35) node[anchor =south] {\Large{$\Omega_k$}};

			\draw [fill=BurntOrange!80!white,line join=round](0,0) to [out=0,in=225] (2,1) to [out=0,in=90] (3,0) to [out=-90,in=45] (2,-1) to [out=90,in=0] (0,0); %Crystal 3, Inner
			\draw (1.90,-0.1) node {{$R_\ast\restrict{\Omega_l}e_1$}};
			\draw [ thick, ->] (1.85,0.15) -- ++(45:0.4cm);
			\draw [fill=BurntOrange!80!white](2,1) to [out=45,in=-90] (2.2,1.5) to [out=30,in=180] (3.5,1.5) to [out=0,in=90] (4,0.5) to [out=180,in=45] (3,0) to [out=90,in=0] (2,1); %Cyrstal 5, Inner
			\draw [ thick, ->] (3.25,0.75) --++(100:0.4);
			\draw [fill=BurntOrange!80!white](3.5,1.5) to [out=45,in=-115] (4,2) to [out=0,in=135] (5,1.5) to [out=-45,in=45] (4.5,0.3) to [out=165,in=0] (4,0.5) to [out=90,in=0] (3.5,1.5); %Crystal 7, Inner
			\draw [ thick, ->] (4.25,1.25) --++(0:0.4);
			\draw [fill=BurntOrange!80!white,line join=round](4,2) to [out=75,in=-90] (4,3) to [out=-45,in=180] (5,2.5) to [out=0,in=45] (5,1.5) to [out=135,in=0] (4,2);%Crystal 8, Inner
			\draw [ thick, ->] (4.5,2.3) --++(-45:0.4);
			
			\draw [fill=black](0,0) circle (0.6mm);
			\draw [fill=black](2,2) circle (0.6mm);
			\draw [fill=black](4,3) circle (0.6mm);
			\draw [fill=black](6,3) circle (0.6mm);
			\draw [fill=black](5,0) circle (0.6mm);
			\draw [fill=black](2,-1) circle (0.6mm);			
		\end{scope}
	\end{tikzpicture}
	\caption{A visualization of a polycrystal $(\Omega,R_\ast)$, where the green and orange domains are boundary and interior grains, respectively; the arrows indicate the orientation of the slip directions $R_\ast e_1$ and the black dots at the boundary highlight the boundary dual points.}\label{fig:polycrystal}
\end{figure}

A variational approach to the deformation behavior of a polycrystal $(\Omega, R_\ast)$ requires the inhomogeneous microscopic energy density
\begin{align}\label{Wpoly_micro}
	W(x,F) = 	\begin{cases}
							(|FR_\ast(x)e_2|^2-1)^{\frac{p}{2}} &\text{ if } F\in \Mcal R_\ast^T(x),\\
							\infty &\text{ otherwise,}
						\end{cases}\quad x\in\Omega, \ F\in\R^{2\times 2},
\end{align}
which results from rotated versions of the corresponding density in the single-crystalline case~\eqref{Whomogenous}.  
Following the work by Bhattacharya \& Kohn~\cite{BaK97}, we define the macroscopic energy density via averages of the microscopic one and optimization over all possible microstructures forming within the grains, that is, 
\begin{align}\label{Wpoly_macro}
	W_{(\Omega,R_\ast)}(F) = \inf_{\substack{u\in W^{1,\infty}(\Omega;\R^2) \\  u=Fx \text{ on }\partial\Omega}} \frac{1}{|\Omega|}\int_{\Omega} W(x, \nabla u) \dd x,\quad F\in\R^{2\times 2}.
\end{align}

In this article, our goal is to characterize (inner and outer bounds on) the domain of $W_{(\Omega,R_\ast)}$, which involves a deeper understanding of the inhomogeneous partial differential inclusion
\begin{align*}
	\begin{cases}
		\nabla u(x) \in \Mcal R_\ast^T(x) &\text{ for a.e.~} x\in \Omega,\\
		u(x) = Fx &\text{ for } x\in\partial\Omega,
	\end{cases}\tag{$P_\Mcal$}\label{unrelaxed}
\end{align*}
where $u\in W^{1,\infty}(\Omega;\R^2)$ is the unknown and $F\in \R^{2\times 2}$; in fact, $W_{(\Omega,R_\ast)}$ is finite exactly on
\begin{align*}
	\Fcal_{\Mcal}(\Omega,R_\ast)&:=\{F\in \R^{2\times 2}:\text{ there exists a solution }u\in W^{1,\infty}(\Omega; \R^2)\text{ to \eqref{unrelaxed}}\},
\end{align*}
considering \eqref{Wpoly_micro} and \eqref{Wpoly_macro}. We point out that the special linear group of degree two constitutes a trivial outer bound 
\begin{align}\label{Sl2_bound}
	\Fcal_\Mcal(\Omega,R_\ast)\subset \Sl(2):= \{F\in \R^{2\times 2} : \det F = 1\}
\end{align}
for any polycrystal $(\Omega,R_\ast)$. This follows from the observations that the solutions to \eqref{unrelaxed} are locally volume-preserving and that the determinant is a null Lagrangian.

In the case that $(\Omega,R_\ast)$ is a single-crystal, i.e.,  the texture $R_\ast$ is constant on all of $\Omega$, the set of attainable macroscopic strains has been identified via a relaxation-type argument  
in~\cite{CoT05} as $\Fcal_\Mcal(\Omega,R_\ast) = \Ncal_s$ with $s=R_\ast e_1$ and 
\begin{align}\label{Ns}
	\Ncal_s:=\{F \in \R^{2\times 2} : \det F=1, |Fs|\leq 1\} = \Mcal_s^\pc = \Mcal_s^\qc = \Mcal_s^\rc,
\end{align}
where $\Mcal^\qc$ ($\Mcal^\pc$, $\Mcal^\rc$) is the quasiconvex (polyconvex, rank-one convex) hull of $\Mcal_s$, see ~\eqref{qc_hull} for the definitions; we set $\Ncal:=\Ncal_{e_1}$.
The proof technique of~\cite{CoT05} exploits - besides classical results on homogeneous partial differential inclusions with Lipschitz solutions (e.g.~\cite[Theorem 4.10]{Mue99}) - the seminal work by M{\"u}ller \& S{\v v}erak~\cite{MuS99}, which in turn extends Gromov's theory~\cite{Gro73,Gro86} of convex integration and its applications. 
This relaxation result in the single-crystal case motivates to study also the relaxed inclusion problem
\begin{align*}
	\begin{cases}
		\nabla u(x) \in \Ncal R_\ast^T(x) &\text{ for a.e.~$x\in\Omega$},\\
		u(x) = Fx &\text{ for $x\in\partial\Omega$},
	\end{cases}\tag{$P_\Ncal$}\label{relaxed}
\end{align*}
with unknown $u\in W^{1,\infty}(\Omega;\R^2)$ and $F\in \R^{2\times 2}$, to gain insight into the structure of $\Fcal_{\Mcal}(\Omega,R_\ast)$. 

The fact that~\eqref{unrelaxed} and~\eqref{relaxed} are non-convex inhomogeneous differential inclusion problems whose target set is unbounded makes them non-standard. In particular, the results of~\cite{KRW15, MaS21, MuS01} are not applicable and, naturally, the classical theory of homogeneous inclusions is limited in providing useful new insight. Given that the solvability of \eqref{unrelaxed} and \eqref{relaxed} depends fundamentally
on the interaction between the shape, size and orientation of the grains makes the analysis many-faceted.

\subsection{Overview of the main results}\label{sec:overview}
Throughout this paper, let $(\Omega,R_\ast)$ be a polycrystal.
To analyze the set of attainable macroscopic strains $\Fcal_\Mcal(\Omega,R_\ast)$, we identify inner and outer bounds, and show that they coincide under suitable conditions on the texture. \smallskip

Two geometry-independent inner bounds are tied to the assumption of constant strain, which can be traced back to the early works by Taylor~\cite{Tay38} and Bishop \& Hill~\cite{BiH51}.
Precisely, we consider the sets of globally affine solutions to \eqref{unrelaxed} and \eqref{relaxed}, given by the finite intersections 
\begin{align}\label{taylor_sets}
	\Tcal_\Mcal(R_\ast(\Omega)) := \bigcap_{x\in\Omega} \Mcal R_\ast^T(x) \qquad
\text{and} \qquad \Tcal_\Ncal(R_\ast(\Omega)) := \bigcap_{x\in\Omega} \Ncal R_\ast^T(x);
\end{align}
notice that these sets are independent of the size and shape of the grains and only take the orientation of the slip systems into account. 
Based on the work on single-crystal plasticity by Conti \& Theil \cite{CoT05} (see also Proposition \ref{prop:affine_lipschitz}), it holds that
\begin{align}\label{taylor_inclusions}
	\Tcal_\Mcal(R_\ast(\Omega)) \subset \Tcal_\Ncal(R_\ast(\Omega)) \subset \Fcal_\Mcal(\Omega,R_\ast).
\end{align}
We refer to $\Tcal_\Mcal(R_\ast(\Omega))$ and $\Tcal_\Ncal(R_\ast(\Omega))$ as the Taylor bound for the differential inclusions \eqref{unrelaxed} and \eqref{relaxed}, in analogy to the terminology in~\cite[Section 2.4]{BaK97} and~\cite[Proposition 2.1]{KoN00} on polycrystalline shape-memory materials.

Our first main result shows that $\Tcal_\Ncal(R_\ast(\Omega))$ depends, in fact, on at most three specific orientations of the polycrystal.
\begin{proposition}[Characterization of the Taylor bound for~\eqref{relaxed}]\label{prop:taylorbound}
	Suppose that $R_\ast$ attains the values
	\begin{align}\label{ordering}
		R_{\theta_1},\ldots,R_{\theta_N} \quad \text{for $0=\theta_1< ... < \theta_N < \pi$ with $N\geq 2$}
	\end{align} 
	on the grains of $(\Omega,R_\ast)$, and let $\theta_{N+1} = \pi$. Then,   
	\begin{align}\label{eq:taylor}
   		 	\Tcal_\Ncal(R_\ast(\Omega)) = \Ncal \cap \Ncal R_{\theta_n}^T \cap \Ncal R_{\theta_{n+1}}^T,
    \end{align}
 where $n\in\{1,...,N\}$ is uniquely determined by the relation $\theta_n < \frac{\pi}{2}\leq\theta_{n+1}$.
\end{proposition}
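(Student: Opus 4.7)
First, reformulate the problem in terms of a single scalar function on the circle. For $F \in \R^{2\times 2}$ with $\det F = 1$, we have $F \in \Ncal R_\theta^T$ if and only if $g_F(\theta) := |FR_\theta e_1|^2 \leq 1$. Since $g_F$ is $\pi$-periodic, we view it as a function on $\R/\pi\Z$ and set $U_F := \{\theta \in \R/\pi\Z : g_F(\theta) \leq 1\}$. The left-hand side of~\eqref{eq:taylor} then equals $\{F \in \Sl(2) : \{\theta_1,\ldots,\theta_N\} \subset U_F\}$, while the right-hand side equals $\{F \in \Sl(2) : \{0,\theta_n,\theta_{n+1}\} \subset U_F\}$ (using $\theta_1 = 0$, so $\Ncal R_{\theta_1}^T = \Ncal$). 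The inclusion ``$\supset$'' is immediate, and the task reduces to showing that whenever $0,\theta_n,\theta_{n+1}$ lie in $U_F$, so do all other $\theta_i$.

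The heart of the argument is a sharp quantitative description of $U_F$. A direct computation based on the double-angle formulas rewrites
\begin{align*}
g_F(\theta) = A + \sqrt{A^2 - 1}\,\cos(2\theta - \phi),
\end{align*}
with $A = \tfrac{1}{2}\trace(F^T F)$ and a suitable phase $\phi \in \R$; the AM-GM inequality applied to the eigenvalues of $F^T F$ (whose product equals $\det(F^T F) = 1$) yields $A \geq 1$, with equality precisely when $F \in \SO(2)$. In the trivial case $F \in \SO(2)$ one has $U_F = \R/\pi\Z$. Otherwise, $A > 1$ and solving the inequality $g_F(\theta) \leq 1$ shows that $U_F$ is a closed arc in $\R/\pi\Z$ of length strictly less than $\pi/2$, so its complement $J_F := \{\theta \in \R/\pi\Z : g_F(\theta) > 1\}$ is an open arc of length strictly greater than $\pi/2$.

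To finish, suppose $F \in \Sl(2)\setminus\SO(2)$ satisfies $\{0,\theta_n,\theta_{n+1}\} \subset U_F$. Then the open arc $J_F$ avoids these three points and is therefore contained in one of the three complementary open arcs $(0,\theta_n)$, $(\theta_n,\theta_{n+1})$, $(\theta_{n+1},\pi)$ of lengths $\theta_n$, $\theta_{n+1}-\theta_n$, $\pi-\theta_{n+1}$. By the defining condition $\theta_n < \pi/2 \leq \theta_{n+1}$, the first arc has length $< \pi/2$ and the third has length $\leq \pi/2$; since $J_F$ is strictly longer than $\pi/2$, neither accommodates it, forcing $J_F \subset (\theta_n,\theta_{n+1})$. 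By the strict ordering of the $\theta_i$, this interval contains no $\theta_i$, and hence $\theta_i \in U_F$ for every $i$, as required. The only subtle point is the strict inequality ``length of $J_F > \pi/2$'', which is exactly what excludes the boundary case $\theta_{n+1} = \pi/2$ from the third arc; everything else is elementary sinusoidal analysis combined with the pigeonhole-type arc-containment step above, so I anticipate no substantial obstacle.
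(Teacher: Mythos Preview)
Your proof is correct and takes a genuinely different route from the paper. The paper parametrizes $\Ncal$ via the map $\psi(\beta,\gamma)=(\beta e_1\mid \tfrac{1}{\beta}e_2+\gamma e_1)$, computes $\Ncal\cap\Ncal R_\theta^T=\SO(2)\psi(\Lambda_\theta)$ for an explicit region $\Lambda_\theta\subset(0,1]\times\R$, and then proves the nestedness $\Lambda_{\theta}\subsetneq\Lambda_{\tilde\theta}$ for $\tilde\theta<\theta$ in $(0,\tfrac{\pi}{2})$ (and the reversed inclusion on $(\tfrac{\pi}{2},\pi)$) by differentiating the endpoint functions $\gamma_\pm(\theta,\beta)$ in $\theta$. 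Your argument bypasses all of this: writing $g_F(\theta)=A+\sqrt{A^2-1}\cos(2\theta-\phi)$ with $A=\tfrac12\trace(F^TF)\geq 1$ immediately shows that $U_F$ is either all of $\R/\pi\Z$ (when $F\in\SO(2)$) or a closed arc of length strictly less than $\tfrac{\pi}{2}$, whence $J_F$ is an open arc of length strictly greater than $\tfrac{\pi}{2}$; the rest is a clean pigeonhole step on the three complementary arcs. Your approach is shorter and more conceptual, and it makes transparent why exactly the two angles adjacent to $\tfrac{\pi}{2}$ matter. The paper's computation, by contrast, yields the explicit description of $\Tcal_\Ncal(R_\ast(\Omega))$ in terms of the $(\beta,\gamma)$-regions $\Lambda_\theta$, which it subsequently exploits in Remark~\ref{rem:taylor}\,$e)$, Remark~\ref{rem:taylor}\,$f)$, and in the proof of Corollary~\ref{cor:trivial_taylorbound}; those follow-up statements would require separate (though not difficult) arguments in your framework.
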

Considering that polycrystals generally consist of large number of grains with different orientations, this result simplifies the computation of this bound considerably, facilitating even an explicit analytical representation as discussed in Remark~\ref{rem:taylor}\,$e)$. 
In particular, we identify necessary and sufficient conditions on the slip directions such that $\Tcal_\Ncal(R_\ast(\Omega))$ is trivial, i.e., identical to the set of rotations $\SO(2)$, see Corollary \ref{cor:trivial_taylorbound}.
\smallskip

A key ingredient for deriving outer bounds on the attainable macroscopic strains is the generalization of the classical Hadamard jump condition~\cite{BaJ87} formulated in Theorem~\ref{theo:hadamard_curved}.
This tool puts us in the position to derive an outer bound on $\Fcal_\Mcal(\Omega,R_\ast)$ by analyzing the rank-one compatibility between the macroscopic and microscopic strains at the boundary grains of the polycrystal.
In particular, if the outer unit normal of $\Omega$ is, in a point, perpendicular to the slip orientation $s_i$ of the associated boundary grain $\Omega_i$, then 
$\Fcal_\Mcal(\Omega,R_\ast)$ is contained in $\Ncal_{s_i}$.
The following statement is a simplified version of Proposition \ref{prop:taylor_bdr} below. 
\begin{proposition}\label{prop:taylor_bdr_intro}
	Let $\Omega_1,\ldots,\Omega_M$ with $M\in \N$ be the boundary grains of $(\Omega,R_\ast)$ and let $J\subset \{1,\ldots,M\}$ be the set of all indices $i$ such that there exists $x_i\in \partial\Omega_i\cap\bdpoly$ with $\nu(x_i)\cdot R_{\ast}\restrict{\Omega_i}e_1=0$. 
	Then, 
	\begin{align}\label{outerbound_intro}
		\Fcal_\Mcal(\Omega,R_\ast)\subset \bigcap_{i\in J} \Ncal R_{\ast}^T\restrict{\Omega_i}.
	\end{align}
\end{proposition}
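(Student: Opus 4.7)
The plan is to apply the generalized Hadamard jump condition of Theorem~\ref{theo:hadamard_curved} at each admissible boundary point $x_i$ and to combine the resulting rank-one compatibility relation with the perpendicularity hypothesis $\nu(x_i)\cdot R_\ast\restrict{\Omega_i} e_1=0$. Showing $F\in \Ncal R_\ast^T\restrict{\Omega_i}$ for every $i\in J$ and then intersecting gives \eqref{outerbound_intro}.

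First I would fix an arbitrary $F\in\Fcal_\Mcal(\Omega,R_\ast)$ together with an associated Lipschitz solution $u\in W^{1,\infty}(\Omega;\R^2)$ of \eqref{unrelaxed}, and, for $i\in J$, set $s_i:=R_\ast\restrict{\Omega_i} e_1$, picking $x_i\in\partial\Omega_i\cap\bdpoly$ as supplied by the definition of $J$. Because $u=Fx$ on $\partial\Omega$, the continuous extension of $u$ by the affine map $y\mapsto Fy$ to a neighbourhood of $x_i$ outside $\Omega$ is globally Lipschitz; its gradient takes values in the closed set $\Mcal R_\ast^T\restrict{\Omega_i}$ on the interior side of $\partial\Omega$ and equals the constant matrix $F$ on the exterior side. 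Since $x_i\in\bdpoly$, the outer unit normal $\nu(x_i)$ exists and $\Omega_i$ is the only grain touching $\partial\Omega$ in a neighbourhood of $x_i$, so Theorem~\ref{theo:hadamard_curved} is applicable and yields some $G\in\Mcal R_\ast^T\restrict{\Omega_i}$ and $a\in\R^2$ with
\[
F-G=a\otimes\nu(x_i).
\]

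Testing this rank-one identity against $s_i$ and using $\nu(x_i)\cdot s_i=0$ gives $Fs_i=Gs_i$. As $GR_\ast\restrict{\Omega_i}\in\Mcal$, one has $|Gs_i|=|GR_\ast\restrict{\Omega_i} e_1|=1$, hence $|Fs_i|=1$. Combined with $\det F=1$ from \eqref{Sl2_bound}, this shows $FR_\ast\restrict{\Omega_i}\in\Ncal$, i.e., $F\in\Ncal R_\ast^T\restrict{\Omega_i}$, and intersecting over $i\in J$ completes the argument.

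The main obstacle I anticipate is the clean invocation of Theorem~\ref{theo:hadamard_curved} at a single point of a curved interface whose target set $\Mcal R_\ast^T\restrict{\Omega_i}$ is unbounded. The condition $x_i\in\bdpoly$ ensures geometric regularity (well-defined normal, unique adjacent grain), while the Lipschitz regularity of $u$ confines $\nabla u$ to a bounded subset of $\Mcal R_\ast^T\restrict{\Omega_i}$, so that the one-sided trace value $G$ produced by the jump theorem really lies in $\Mcal R_\ast^T\restrict{\Omega_i}$ (closedness of $\Mcal$ absorbs any closure). A minor subtlety is that only the action of $G$ on $s_i$ enters the argument, so no information about a second slip direction of $G$ is required, which keeps the computation short.
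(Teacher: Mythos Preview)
Your approach is essentially the same as the paper's: apply the generalized Hadamard jump condition (Theorem~\ref{theo:hadamard_curved}) at each $x_i$ and exploit the orthogonality $\nu(x_i)\cdot s_i=0$ to deduce $|Fs_i|\leq 1$, which together with $\det F=1$ gives $F\in\Ncal_{s_i}$. The paper packages the second step as Remark~\ref{rem:rank_one}\,$a)$ (``$F$ is $s^\perp$-compatible with $\Ncal_s$ if and only if $F\in\Ncal_s$''), but your direct computation $Fs_i=Gs_i$ is the same argument unpacked.

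There is, however, one genuine slip. Theorem~\ref{theo:hadamard_curved} does \emph{not} produce a matrix $G$ in the target set $\Bcal=\Mcal R_\ast^T\restrict{\Omega_i}$ itself; it only guarantees $\nu(x_i)$-compatibility with the \emph{polyconvex hull} $\Bcal^{\pc}=\Ncal R_\ast^T\restrict{\Omega_i}$. Your justification in the last paragraph---that Lipschitz regularity of $u$ confines $\nabla u$ to a bounded subset of $\Mcal R_\ast^T$ and hence the ``trace value'' $G$ lands in $\Mcal R_\ast^T$---misreads the mechanism of the theorem: the $G$ arises from a blow-up and a Young-measure barycenter argument (cf.~the proof of Theorem~\ref{theo:hadamard_curved}), not as a pointwise one-sided limit of $\nabla u$, so in general $G$ lies only in the relevant convex hull. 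Fortunately this does not damage your argument: with $G\in\Ncal_{s_i}$ you have $|Gs_i|\leq 1$ rather than $|Gs_i|=1$, and then $|Fs_i|=|Gs_i|\leq 1$ is exactly what you need. Just replace ``$G\in\Mcal R_\ast^T\restrict{\Omega_i}$'' by ``$G\in(\Mcal R_\ast^T\restrict{\Omega_i})^{\pc}=\Ncal R_\ast^T\restrict{\Omega_i}$'' and ``$|Gs_i|=1$'' by ``$|Gs_i|\leq 1$'', and the proof is correct.
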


Observe that the outer bound in~\eqref{outerbound_intro} has the same overall structure as the aforementioned inner bound $\Tcal_\Ncal(\Omega;R_\ast)$ and can thus be simplified and expressed in the same way, cf.~Proposition \ref{prop:taylorbound}. 
In particular, Proposition \ref{prop:taylor_bdr_intro} allows us to conclude for examples of polycrystals with sufficient symmetry and selected bicrystals (see Examples~\ref{ex:symmetric_outer} and~\ref{ex:bicrystals}) that the attainable macroscopic strains coincide with the globally affine solutions of \eqref{relaxed}, i.e., $\Fcal_\Mcal(\Omega,R_\ast) = \Tcal_\Ncal(R_\ast(\Omega))$.

The Taylor bound for~\eqref{relaxed}, however, is not always optimal, meaning that the second inclusion in \eqref{taylor_inclusions} is strict in general. 
To see this, we employ a geometric setup similar to the construction in \cite{CKZ17, Pom10} and design a polycrystal together with a continuous and finitely piecewise affine solution to the relaxed problem \eqref{relaxed} with boundary values outside of the Taylor bound, which then gives rise to a Lipschitz solution to \eqref{unrelaxed} via Proposition \ref{prop:affine_lipschitz}.

\smallskip
This article is outlined as follows. 
A few preliminaries and mathematical tools for the analysis of the inclusions \eqref{unrelaxed} and \eqref{relaxed} and the corresponding sets $\Mcal_s$ and $\Ncal_s$ are collected in Section \ref{sec:preliminaries}.  
The focus of Section \ref{sec:taylor} is the characterization and discussion of the inner Taylor bounds. 
Outer bounds resulting from rank-one compatibility conditions at the boundary grains are derived in Section \ref{sec:outer_bounds} and illustrated by a few examples. Finally, we address the question of optimality of the Taylor bound for~\eqref{relaxed}, proving in Section~\ref{sec:constructions} that this is not the case in general.

\section{Preliminaries}\label{sec:preliminaries}

\subsection{Notation}\label{sec:notation}
Throughout this paper, we use the following notation.
The standard basis vectors in $\R^2$ are denoted by $e_1,e_2$, and $\Scal^1=\{x\in \R^2: |x|=1\}$ is the one-dimensional unit sphere with respect to the Euclidean norm.
We write $a\cdot b$ for the standard scalar product of two vectors $a, b\in \R^2$, and define their tensor product  $a\otimes b$ as $(a\otimes b)_{ij} = a_ib_j$ for $i,j\in\{1,2\}$. 
The space of real $2\times 2$ matrices is equipped with the Frobenius norm $\abs{\cdot}$. 
Moreover, $\Sl(2)$ consists of all matrices in $\R^{2\times 2}$ with determinant equal to one, and $\SO(2)$ denotes the set of rotations in $\R^{2\times 2}$; we write  
\begin{align*}
	R_\theta := \begin{pmatrix}\cos \theta & -\sin\theta\\\sin\theta&\cos\theta\end{pmatrix} \in\SO(2)
\end{align*}
for a rotation by an angle $\theta\in\R$, and define $a^\perp = R_{\frac{\pi}{2}}a$ for any $a\in \R^2$. 
For $x\in \R^2$, $\nu\in \Scal^1$, and $r>0$, let
\begin{align*}
	B^+_\nu(x,r) = \{y\in B(x,r) : (y-x)\cdot \nu > 0\}\quad\text{ and }\quad 
	B^-_\nu(x,r) = \{y\in B(x,r) : (y-x)\cdot \nu < 0\},
\end{align*}
where $B(x,r)\subset \R^2$ is the open ball with center $x\in \R^2$ and radius $r>0$.
If not specified otherwise, the two-dimensional Lebesgue-measure of a measurable set $U\subset \R^2$ is denoted by $|U|$.

The product and sum of two sets $\Fcal, \Gcal \subset\R^{2\times 2}$ are interpreted in the sense of Minkowski, i.e., $\Fcal \Gcal=\{FG: F\in \Fcal, G\in \Gcal\}$ and $\Fcal+\Gcal=\{F+G: F\in \Fcal, G\in \Gcal\}$. 
We work with the following definitions of (finite) generalized convex hulls: If $\Fcal\subset \R^{2\times 2}$, then
\begin{align}\label{qc_hull}
	\Fcal^\qc:= \bigl\{F\in \R^{2\times 2}: h(F) \leq \sup_{G\in \Fcal} h(G) \text{ for all } h: \R^{2\times 2} \to \R \text{ quasiconvex}\bigr\},
\end{align}
is the quasiconvex hull of $\Fcal$, and $\Fcal$ is called quasiconvex if $\Fcal=\Fcal^\qc$; the polyconvex and rank-one convex hulls $\Fcal^\pc$ and $\Fcal^\rc$ as well as polyconvexity and rank-one convexity of $\Fcal$ are introduced analogously. More details on the generalized notions of convexity for sets can be found in, e.g.,~\cite[Chapter~7]{Dac08}.

For a set $U\subset\R^2$, we define the indicator function $\ONE_U$ as $\ONE_U(x)=1$ for $x\in U$ and $\ONE_U(x)=0$ otherwise. 
Throughout, we adopt the standard notation for Lebesgue spaces, Sobolev spaces, spaces of functions of bounded variation, and spaces of continuously differentiable functions; particularly, $L^\infty(U;\R^2)$, $W^{1,\infty}(U;\R^2)$, $BV_{\rm loc}(U;\R^{2\times 2})$ and $C^1(\overline{U};\R^2)$ for open $U\subset \R^2$.

%\subsection{Comparison between relaxed and unrelaxed problem}\label{sec:comparison}
\subsection{Technical tools and auxiliary results}
We begin with the following connection between the differential inclusion problem \eqref{unrelaxed} and its relaxed version \eqref{relaxed}. 

\begin{proposition}\label{prop:affine_lipschitz}
	If there is a continuous, finitely piecewise affine solution to the relaxed problem
	\eqref{relaxed}, then there exists a Lipschitz solution to \eqref{unrelaxed} with the same boundary values.
\end{proposition}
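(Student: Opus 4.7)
The strategy is to start from the given piecewise affine solution of the relaxed problem, refine the pieces so that each one lies in a single grain, and then invoke the homogeneous single-crystal result $\Fcal_\Mcal = \Ncal_s$ of Conti--Theil \cite{CoT05} on each piece to replace the relaxed affine map by a genuine Lipschitz solution of \eqref{unrelaxed}. Finally, glue everything back together.

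\emph{Step 1 (refinement).} Let $v\in W^{1,\infty}(\Omega;\R^2)$ be the continuous, finitely piecewise affine solution of \eqref{relaxed} with $v=Fx$ on $\partial\Omega$, associated with a finite family of pairwise disjoint bounded Lipschitz subdomains $P_1,\ldots,P_M$ covering $\Omega$ up to a null set and with $v|_{P_j}(x)=A_jx+b_j$. Intersect this partition with the grain partition $\Omega=\mathrm{int}\bigcup_k\overline{\Omega_k}$; after dropping empty intersections and, if necessary, further decomposing into Lipschitz pieces (e.g., by triangulating the pieces $P_j\cap\Omega_k$), I obtain a finite partition $\{Q_\ell\}_{\ell=1}^{M'}$ of $\Omega$ into bounded Lipschitz subdomains, each contained in a single grain $\Omega_{k(\ell)}$, on which $v(x)=C_\ell x+d_\ell$ with $C_\ell\in\Ncal R_\ast^T|_{\Omega_{k(\ell)}}=\Ncal_{s_{k(\ell)}}$, where $s_{k(\ell)}=R_\ast|_{\Omega_{k(\ell)}}e_1$.

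\emph{Step 2 (convex integration on each piece).} Apply the single-crystal relaxation identity $\Fcal_\Mcal(Q_\ell,R_\ast|_{\Omega_{k(\ell)}})=\Ncal_{s_{k(\ell)}}$ recalled in \eqref{Ns} (which is \cite[Thm.~1]{CoT05} and is itself based on the convex integration construction of \cite{MuS99}) to produce, for each $\ell$, a Lipschitz map $w_\ell:Q_\ell\to\R^2$ with $w_\ell(x)=C_\ell x+d_\ell$ on $\partial Q_\ell$ and $\nabla w_\ell\in\Mcal_{s_{k(\ell)}}=\Mcal R_\ast^T|_{\Omega_{k(\ell)}}$ a.e.\ in $Q_\ell$. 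The only adjustment compared with \cite{CoT05} is the affine shift by the constant $d_\ell$, which is harmless: one simply translates the target.

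\emph{Step 3 (gluing).} Define $u:\Omega\to\R^2$ by $u|_{Q_\ell}:=w_\ell$. On every interface $\partial Q_\ell\cap\partial Q_{\ell'}$, both $w_\ell$ and $w_{\ell'}$ agree with $v$ by construction, so $u$ is continuous and matches $v=Fx$ on $\partial\Omega$. The inclusion $\nabla u(x)\in\Mcal R_\ast^T(x)$ then holds a.e.\ in $\Omega$, and since there are only finitely many pieces each carrying a Lipschitz map, $u$ is globally Lipschitz on $\Omega$, so $u$ solves \eqref{unrelaxed} with the prescribed boundary data.

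\emph{Main obstacle.} The only non-bookkeeping ingredient is Step~2, for which the Conti--Theil/M\"uller--\v{S}ver\'ak result delivers exactly what is needed, precisely because within a single grain the inhomogeneous inclusion reduces to a homogeneous one. The refinement in Step~1 is what makes this reduction possible; checking that the intersections $P_j\cap\Omega_k$ can be arranged to be Lipschitz (via a triangulation argument if needed) is a minor technical point, and the gluing and Lipschitz bound in Step~3 are then routine because only finitely many pieces are involved.
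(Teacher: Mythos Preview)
Your proposal is correct and follows essentially the same approach as the paper: refine the piecewise affine partition so that each piece sits in a single grain, apply the Conti--Theil single-crystal result (resting on M\"uller--\v{S}ver\'ak convex integration) on each piece, and glue. The paper's own justification is terser---it simply invokes \cite[Theorem~4]{CoT05} grain-wise on the affine components together with \cite[Lemma~2]{CoT05}---so your write-up is, if anything, more detailed; the only discrepancy is in the precise numbering of the cited results from \cite{CoT05}.
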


This result is based on the analysis in the paper~\cite{CoT05}  by Conti \& Theil on a model in single-crystal plasticity with one active slip system. 
More precisely, it follows from \cite[Theorem 4]{CoT05} applied grain-wise to each of the affine components of the solution to~\eqref{relaxed}, along with \cite[Lemma 2]{CoT05}, which again relies on the convex integration theory by M\"uller \& \v{S}ver\'{a}k~\cite[Theorem 1.3]{MuS99}. 

As a consequence of Proposition~\ref{prop:affine_lipschitz}, one obtains an inner bound on $\Fcal_\Mcal(\Omega,R_\ast)$ by identifying globally affine solutions to \eqref{relaxed}, cf.~\eqref{taylor_sets} and Section~\ref{sec:taylor}.
\medskip

The subsequent auxiliaries on rank-one connectedness with the sets $\Ncal_s$ are used in Section \ref{sec:outer_bounds} in the context of outer bounds resulting from the rank-one compatibility at the boundary grains.
Considering that $\Sl(2)$ is a trivial outer bound for all polycrystals, it suffices to restrict the discussion to this class of matrices. 
We first introduce the following terminology.

\begin{definition}[\boldmath{$\nu$-compatibility}]\label{def:compatibility}
Let $\nu\in \Scal^1$, $A\in \R^{2\times 2}$, and $\Bcal\subset \R^{2\times 2}$.
We call $A$ $\nu$-compatible with $\Bcal$, if $A$ is rank-one connected to $\Bcal\subset \R^{2\times 2}$ along linear interfaces with normal $\nu$, i.e., if there exists $a\in \R^2$ such that $a\otimes \nu \in \Bcal - A$, or equivalently, if $A\nu^\perp\in \Bcal\nu^\perp$.
\end{definition}

The next lemma gives a simple condition for the rank-one connectedness between a given element in $\Sl(2)$ and $\Ncal_s$. 
Our calculations are based on the observation that, for any $s\in\Scal^1$, the set $\Ncal_s$ consists of matrices of the form 
\begin{align}\label{matrices_Ncal_s}
	R\bigl(\beta s\otimes s + \tfrac{1}{\beta}s^\perp\otimes s^\perp + \gamma s\otimes s^\perp\bigr)
\end{align}
with $R\in\SO(2)$, $\beta\in(0,1]$ and $\gamma\in \R$. Moreover, if $\beta>0$ then \eqref{matrices_Ncal_s} describes all elements in the larger set $\Sl(2)$.

\begin{lemma}\label{lem:rank-one}
	Let $s, \nu \in \Scal^1$ with $s\cdot \nu\neq 0$. 
	Further, let $F\in\Sl(2)$, which is assumed to be represented as in~\eqref{matrices_Ncal_s} with $R\in \SO(2)$, $\beta>0$, and $\gamma\in \R$. Then the following statements are equivalent:
	\begin{itemize}
		\item[$(i)$] $F$ is $\nu$-compatible with $\Ncal_s$;
		\item[$(ii)$] $F$ is $\nu$-compatible with $\Mcal_s$;
		\item[$(iii)$] it holds that
			\begin{align}\label{eq:rank-one}
				\Bigl(\frac{s\cdot \nu^\perp}{s\cdot \nu}\beta +\gamma\Bigr)^2 + \frac{1}{\beta^2}\geq 1.
			\end{align}
	\end{itemize}
\end{lemma}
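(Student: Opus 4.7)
The plan is to prove $(ii) \Rightarrow (i) \Rightarrow (iii) \Rightarrow (ii)$. The implication $(ii)\Rightarrow(i)$ is immediate since $\Mcal_s \subset \Ncal_s$. The main work goes into translating $\nu$-compatibility with $\Ncal_s$ (resp.\ $\Mcal_s$) into the inequality in (iii), using the equivalent formulation from Definition~\ref{def:compatibility}: $F$ is $\nu$-compatible with $\Bcal$ iff $F\nu^\perp \in \Bcal\nu^\perp$, i.e., $F\nu^\perp = G\nu^\perp$ for some $G \in \Bcal$.

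The central computation is to evaluate both sides of this vector equation in the orthonormal basis $\{s, s^\perp\}$. Writing $\nu^\perp = (s\cdot \nu^\perp) s + (s\cdot \nu)s^\perp$ and using the representation \eqref{matrices_Ncal_s} for $F$, a direct calculation gives
\begin{align*}
F\nu^\perp = \bigl((s\cdot\nu^\perp)\beta + (s\cdot\nu)\gamma\bigr) R s + \frac{s\cdot\nu}{\beta}\, R s^\perp,
\end{align*}
and analogously for $G \in \Ncal_s$ represented by parameters $R' \in \SO(2)$, $\beta' \in (0,1]$, $\gamma' \in \R$. Taking squared norms yields
\begin{align*}
|F\nu^\perp|^2 &= \bigl((s\cdot\nu^\perp)\beta + (s\cdot\nu)\gamma\bigr)^2 + \frac{(s\cdot\nu)^2}{\beta^2},\\
|G\nu^\perp|^2 &= \bigl((s\cdot\nu^\perp)\beta' + (s\cdot\nu)\gamma'\bigr)^2 + \frac{(s\cdot\nu)^2}{\beta'^2}.
\end{align*}

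The key observation is then the coincidence of ranges: as $G$ varies in $\Mcal_s$ (forcing $\beta' = 1$, with $\gamma' \in \R$ free), the quantity $|G\nu^\perp|^2$ attains every value in $[(s\cdot\nu)^2, \infty)$, the minimum being realized at $\gamma' = -(s\cdot\nu^\perp)/(s\cdot\nu)$ — note that this uses $s\cdot\nu \neq 0$. Enlarging from $\Mcal_s$ to $\Ncal_s$ cannot decrease this range, since the $\beta'$-contribution $(s\cdot\nu)^2/\beta'^2 \geq (s\cdot\nu)^2$ for $\beta' \in (0,1]$. Hence $\{|G\nu^\perp|^2 : G \in \Mcal_s\} = \{|G\nu^\perp|^2 : G \in \Ncal_s\} = [(s\cdot\nu)^2, \infty)$.

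Combining these ingredients finishes the proof. Since $F\nu^\perp$ and $G\nu^\perp$ are both nonzero vectors in $\R^2$ (their norms are bounded below by $|s\cdot\nu|/\beta > 0$ and $|s\cdot\nu| > 0$, respectively), equality can always be arranged by choosing $R'$ to be the appropriate rotation, provided the norms match. Compatibility with either $\Ncal_s$ or $\Mcal_s$ is therefore equivalent to $|F\nu^\perp|^2 \geq (s\cdot\nu)^2$, and dividing by $(s\cdot\nu)^2$ and rearranging gives exactly (iii). Conversely, starting from (iii), one explicitly constructs $G \in \Mcal_s$ by choosing $\beta' = 1$, picking $\gamma'$ so that $|G\nu^\perp|^2 = |F\nu^\perp|^2$, and then selecting $R' \in \SO(2)$ rotating $G\nu^\perp$ onto $F\nu^\perp$; this yields (ii).

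The computation is routine, so there is no serious technical obstacle; the conceptual point worth highlighting is the equality of ranges of $|G\nu^\perp|^2$ over $\Mcal_s$ and $\Ncal_s$, which is what makes (i) and (ii) collapse despite $\Mcal_s$ being strictly smaller, and which crucially relies on the non-degeneracy assumption $s\cdot\nu \neq 0$.
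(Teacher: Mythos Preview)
Your proof is correct and follows essentially the same approach as the paper: both reduce $\nu$-compatibility to the inequality $|F\nu^\perp|^2 \geq (s\cdot\nu)^2$ via the parametrization~\eqref{matrices_Ncal_s} and the freedom in the rotation parameter. Your packaging via the range of $|G\nu^\perp|^2$ over $\Mcal_s$ and $\Ncal_s$ is slightly more streamlined than the paper's component-wise equations (e1)--(e2), but the underlying computation is identical.
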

\begin{proof}
	We start with a useful general equivalence:
	Any two matrices $G,\bar{G}\in \Sl(2)$ of the form \eqref{matrices_Ncal_s} with $R=\Id,\bar{R}\in \SO(2)$, $\beta,\bar{\beta}>0$, $\gamma,\bar{\gamma}\in \R$, respectively, satisfy 
	\begin{align}\label{Fnuperp}
		G\nu^\perp = \bar{G}\nu^\perp
	\end{align}		
	if and only if
	\begin{numcases}{}
		\bar\beta\frac{s\cdot \nu^\perp}{s\cdot \nu} + \bar\gamma = N_{s,\nu}(\beta,\gamma)\cdot \bar{R}s,\label{e1}\\
		\frac{1}{\bar\beta} = \bar Rs^\perp\cdot N_{s,\nu}(\beta,\gamma),\label{e2}
	\end{numcases}
	with
	\begin{align*}
		N_{s,\nu}(\beta,\gamma)=\Big(\frac{s\cdot \nu^\perp}{s\cdot \nu}\beta + \gamma\Big)s + \frac{1}{\beta}s^\perp;
	\end{align*}
	this follows simply via scalar multiplication of \eqref{Fnuperp} with $\bar{R}s$ and $\bar{R}s^\perp$. 
	\medskip	
	
	To show the implication $(i)\Rightarrow (iii)$, let $F$ be as in the statement and assume without restriction that $R=\Id$. 
	Suppose that there exists $\bar{F}\in \Ncal_s$ of the form \eqref{matrices_Ncal_s} with $\bar\beta\in(0,1]$, $\bar\gamma \in \R$, and $\bar R\in\SO(2)$ such that \eqref{e1} and \eqref{e2} hold. 	
	Estimating the right-hand side of \eqref{e2} yields
	\begin{align*}
		1\leq \frac{1}{\bar\beta^2} \leq |N_{s,\nu}(\beta,\gamma)|^2 = \left(\frac{s\cdot \nu^\perp}{s\cdot \nu}\beta + \gamma\right)^2 + \frac{1}{\beta^2},
	\end{align*}
	which is the desired inequality \eqref{eq:rank-one}. 
	
	As for $(iii)\Rightarrow (ii)$, assume that~\eqref{eq:rank-one} is fulfilled and take $\bar\beta=1$.
	It remains to find $\bar{\gamma}\in\R$ and $\bar{R}\in\SO(2)$ such that \eqref{e1} and \eqref{e2} are satisfied.
	For $\bar{R}$, it is enough to determine $\bar{R}s^\perp$, which follows from considering solutions $\xi\in\Scal^1$ to
	\begin{align*}
		1 = \xi \cdot N_{s,\nu}(\beta,\gamma);
	\end{align*}
	these exist since $|N_{s,\nu}(\beta,\gamma)|\geq 1$ due to \eqref{eq:rank-one}.
	Therefore, \eqref{e2} is true and $\bar\gamma$ is uniquely determined by $\bar R$, $\bar{\beta}$ and \eqref{e1}. 
	Since the implication $(ii)\Rightarrow (i)$ is trivial, the statement is proven. 
\end{proof}

The next remark addresses the case of $s^\perp$-compatibility with the sets $\Ncal_s$ and $\Mcal_s$.

\begin{remark}\label{rem:rank_one}
	Let $s\in \Scal^1$ and $F\in \Sl(2)$.
	
	$a)$ Basic geometric arguments show:
	\begin{center} 
		$F$ is $s^\perp$-compatible with $\Ncal_s$ if and only if $F\in\Ncal_s$;
	\end{center}
 	\begin{center}
 		$F$ is $s^\perp$-compatible with $\Mcal_s$ if and only if $F\in\Mcal_s$.
	\end{center}
	Observe in particular, that the equivalence $(i)\Leftrightarrow (ii)$ in Lemma \ref{lem:rank-one} is not valid in this case. 
	\medskip
	
	$b)$ Let $\Dcal\subset \Scal^1$ be dense in $\Scal^1$. It holds that $F$ is $s^\perp$-compatible with $\Ncal_s$ if and only if $F$ is $\nu$-compatible with $\Ncal_s$ for all  $\nu\in \Dcal$. 
	Indeed, assuming that $\pm s^\perp\notin \Dcal$, that is a consequence of $(i)\Leftrightarrow (iii)$ in Lemma \ref{lem:rank-one} since
	\begin{align*}
		t\mapsto (t\beta + \gamma)^2 + \frac{1}{\beta^2}\geq 1 \quad\text{on a dense subset of $\R$}
	\end{align*}
	if and only if $|Fs|= \beta \leq  1$, which is equivalent to the $s^\perp$-compatibility of $F$ with $\Ncal_s$ due to $a)$. 
\end{remark}

The last auxiliary result follows from a slight modification of the first step in the proof of~\cite[Theorem 1.1]{CDK13a}, according to which the generalized convex hulls of $\Mcal_{e_1}\cup \Mcal_{e_2}$ coincide with $\Sl(2)$.  
Instead of the two orthogonal slip directions $e_1$ and $e_2$, we consider here two linearly independent orientations $s, s'$. 
Alternatively, the next proposition can be seen as a consequence of the sharper result \cite[Lemma 4.17]{Sch13}, where the lamination convex hull of $\Mcal_{s}\cup\Mcal_{s'}$ is identified as $\Sl(2)$, using first-order laminates.  
For the reader's convenience, we include the simpler proof based on the strategy in \cite{CDK13a}.

\begin{proposition}\label{prop:qc_hull}
	Let $s,s'\in\Scal^1$ with $s'\neq \pm s$. Then,
	\begin{align*}
		(\Mcal_s \cup \Mcal_{s'})^\rc=(\Mcal_s \cup \Mcal_{s'})^\qc=(\Mcal_s \cup \Mcal_{s'})^\pc = \Sl(2).
	\end{align*}
\end{proposition}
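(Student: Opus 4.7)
For every $\Fcal\subset\R^{2\times 2}$ the standard chain $\Fcal^{\rc}\subset\Fcal^{\qc}\subset\Fcal^{\pc}$ holds. Moreover, $\Sl(2)=\{F\in\R^{2\times 2}:\det F=1\}$ is polyconvex as a level set of the polyaffine map $\det$, so from $\Mcal_s\cup\Mcal_{s'}\subset\Sl(2)$ one immediately gets $(\Mcal_s\cup\Mcal_{s'})^{\pc}\subset\Sl(2)$. It therefore suffices to prove the reverse inclusion $\Sl(2)\subset(\Mcal_s\cup\Mcal_{s'})^{\rc}$: all four sets then coincide.

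Let $F\in\Sl(2)$. If $|Fs|\leq 1$ (resp.\ $|Fs'|\leq 1$) then $F\in\Ncal_s=\Mcal_s^{\rc}$ (resp.\ $F\in\Ncal_{s'}=\Mcal_{s'}^{\rc}$) by the single-crystal identity~\eqref{Ns}. The only remaining case is $|Fs|>1$ and $|Fs'|>1$, which I would settle by producing an explicit first-order laminate: find $A\in\Mcal_s$, $B\in\Mcal_{s'}$ with $A-B$ of rank one and a weight $\lambda\in(0,1)$ such that $F=\lambda A+(1-\lambda)B$. Natural candidates for $A$ and $B$ arise by scanning the rank-one lines in $\Sl(2)$ through $F$, which are parametrised by $\nu\in\Scal^1$ as $L_\nu(t):=F(I+t\,\nu^{\perp}\otimes\nu)$ with $t\in\R$.

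Substituting into $|L_\nu(t)s|^2=1$ (and similarly $|L_\nu(t)s'|^2=1$) yields a quadratic in $t$ whose discriminant, after applying the Lagrange-type identity $(Fa\cdot Fb)^2=|Fa|^2|Fb|^2-(a\cdot b^{\perp})^2$ valid on $\Sl(2)$, reduces to a positive multiple of $|F\nu^{\perp}|^2-(s\cdot\nu)^2$ (resp.\ $|F\nu^{\perp}|^2-(s'\cdot\nu)^2$). Since $|Fs|,|Fs'|>1$, the constant terms of both quadratics are strictly positive, so whenever the discriminants are nonnegative the two roots of each quadratic share a common sign, namely that of $-(\nu\cdot s)(Fs\cdot F\nu^{\perp})$ and $-(\nu\cdot s')(Fs'\cdot F\nu^{\perp})$, respectively. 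The key is therefore to choose $\nu$ for which these two signs are opposite and both discriminants are positive; then $L_\nu$ crosses $\Mcal_s$ at some $t^{\ast}>0$ and $\Mcal_{s'}$ at some $t^{\ast\ast}<0$ (or vice versa), and $F=L_\nu(0)$ appears as the desired convex combination on a rank-one line.

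I expect the main obstacle to be producing such a $\nu$. The relevant product
\begin{align*}
\Phi(\nu):=(\nu\cdot s)(\nu\cdot s')\,(Fs\cdot F\nu^{\perp})(Fs'\cdot F\nu^{\perp})
\end{align*}
factors as four linear forms in $\nu$, whose kernels meet $\Scal^1$ in four pairs of antipodal zeroes; these pairs are pairwise distinct, because any coincidence would force $Fs\cdot Fs'=0$, but the Lagrange identity together with $|Fs|,|Fs'|>1$ and $|s\cdot s'^{\perp}|\leq 1$ gives $(Fs\cdot Fs')^2=|Fs|^2|Fs'|^2-(s\cdot s'^{\perp})^2>0$. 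Hence the sign of $\Phi$ alternates between eight open arcs, and one can pick $\nu$ in an open arc on which $\Phi<0$. Finally, near $\nu=s^{\perp}$ one has $|F\nu^{\perp}|^2\to|Fs|^2>1\geq \max\{(s\cdot\nu)^2,(s'\cdot\nu)^2\}$, so both discriminants stay strictly positive in a neighbourhood of $s^{\perp}$, and the arc where $\Phi<0$ can be chosen to touch this neighbourhood on the appropriate side, completing the construction; the remaining identifications $\rc=\qc=\pc$ follow from the sandwich noted in the first paragraph.
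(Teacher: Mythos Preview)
Your argument is correct and takes a genuinely different route from the paper's. Both proofs find, for $F\in\Sl(2)$ with $|Fs|,|Fs'|>1$, a rank-one line through $F$ in $\Sl(2)$ meeting the target set on both sides of $F$. The paper fixes the specific direction $(s+s')\otimes(s-s')$ (or its swap, depending on the sign of $Fs\cdot Fs'$) and shows via a one-variable convexity argument for $t\mapsto |F_t(s-s')|^2-|F_t(s+s')|^2$ that this line hits $\Ncal_s\cup\Ncal_{s'}$ at some $t_-<0<t_+$, then invokes $\Ncal_s=\Mcal_s^{\rc}$. You instead parametrise the direction by $\nu\in\Scal^1$ and select $\nu$ slightly off $s^\perp$ via the sign analysis of $\Phi$, landing directly in $\Mcal_s$ on one side and $\Mcal_{s'}$ on the other --- this is closer in spirit to the first-order laminate of~\cite{Sch13} that the paper also cites. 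The paper's choice is more economical (one explicit direction, one scalar function); yours buys a laminate directly into the unrelaxed sets $\Mcal_s\cup\Mcal_{s'}$ without detouring through $\Ncal_s$.

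One minor imprecision worth fixing: your blanket claim that ``any coincidence would force $Fs\cdot Fs'=0$'' does not cover all six possible coincidences among the four pairs of zeroes of $\Phi$ --- for instance, $\{\nu\cdot s=0\}=\{\nu\cdot s'=0\}$ and $\{Fs\cdot F\nu^\perp=0\}=\{Fs'\cdot F\nu^\perp=0\}$ are ruled out directly by $s\neq\pm s'$ and the invertibility of $F$, not by $Fs\cdot Fs'\neq 0$. This is harmless, because what your argument actually needs is only that the zero of $\Phi$ at $\nu=s^\perp$ is \emph{simple}, i.e.\ that the other three factors do not vanish there: $s^\perp\cdot s'\neq 0$ (from $s\neq\pm s'$), $Fs\cdot F(s^\perp)^\perp=-|Fs|^2\neq 0$, and $Fs'\cdot F(s^\perp)^\perp=-Fs\cdot Fs'\neq 0$ (by your Lagrange-identity computation). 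With this, $\Phi$ changes sign at $s^\perp$, both discriminants remain positive nearby, and you can pick $\nu$ on the appropriate side.
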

\begin{proof}
	In light of
	\begin{align*}
		(\Ncal_{s}\cup\Ncal_{s'})^\rc = (\Mcal_s^\rc \cup \Mcal_{s'}^\rc)^\rc = (\Mcal_s \cup \Mcal_{s'})^\rc \subset (\Mcal_s \cup \Mcal_{s'})^\pc  \subset \Sl(2),
	\end{align*}
	cf.~\eqref{Ns}, it suffices to prove  that $\Sl(2)\subset (\Ncal_s \cup \Ncal_{s'})^\rc$. 
	To this end, we show that any $F\in\Sl(2)$ can be expressed as a convex combination of rank-one connected matrices $F_+,F_-\in\Ncal_s\cup\Ncal_{s'}$, and exploit that the lamination convex hull is contained in the rank-one convex hull \cite[Theorems 7.17 and 7.28]{Dac08}. 

	Suppose without loss of generality that $F\in \Sl(2)\setminus (\Ncal_{s}\cup\Ncal_{s'})$ and consider the rank-one line
	\begin{align*}
		t\mapsto F_t = F(\Id + t(s+s')\otimes (s-s')),
	\end{align*}
 	along which the determinant is constant by construction; also, assume that $Fs\cdot Fs'\leq 0$, otherwise switch the roles of $s+s'$ and $s-s'$ in the rank-one line. 
	Then it holds that
	\begin{align*}
		|F(s+s')|^2 = |Fs|^2 + 2Fs\cdot Fs' + |Fs'|^2 \leq |F(s-s')|^2,
	\end{align*}
	with equality if and only if $Fs\cdot Fs'=0$. 
	If the latter is satisfied, the estimate 
	\begin{align*}
		\det(Fs|Fs') = \det F\det(s|s') = \det (s|s') \leq 1,
	\end{align*}
	implies that $\min\{|Fs|,|Fs'|\}\leq 1$.
	However, this contradicts $F\in \Sl(2)\setminus (\Ncal_{s}\cup\Ncal_{s'})$, which is why we take $|F(s+s')| < |F(s-s')|$ in the following.
	
The quadratic function
	\begin{align*}
		\ffi: \R \to \R,\quad t\mapsto |F_t(s-s')|^2 - |F_t(s+s')|^2
	\end{align*}
	then satisfies $\ffi(0) <0$ and $\ffi''>0$ pointwise. 
	The latter is a direct consequence of $|F_t(s+s')| = |F(s+s')|$ for every $t\in\R$, utilizing that $(s+s')\cdot(s-s')=0$.
	Hence, there exist $t_-<0$ and $t_+>0$ with $\ffi(t_\pm) =0$, so that $F_-:= F_{t_-}$ and $F_+:=F_{t_+}$ satisfy $|F_\pm(s+s')| = |F_\pm(s-s')|$. 
	Exactly as before, one concludes that $\min\{|F_\pm s|,|F_\pm s'|\} \leq 1$,
	and thus, $F_\pm \in \Ncal_s \cup \Ncal_{s'}$, as desired.
\end{proof}

The previous result is used in Section~\ref{sec:outer_bounds} to explain why considering boundary dual points, cf.~\eqref{bddual_def}, does not add any non-trivial contributions to outer bounds emerging from rank-one compatibility at the boundary grains.

\section{A geometry-independent inner bound}\label{sec:taylor}
In this section, we identify and characterize inner bounds on $\Fcal_\Mcal(\Omega,R_\ast)$ in terms of the globally affine solutions to \eqref{unrelaxed} and \eqref{relaxed}.
Note that such solutions merely take the occurring slip directions, but not the geometry of the grains, i.e., their size and shape, into account. 
The expressions to be determined are the Taylor bounds $\Tcal_\Mcal(R_\ast(\Omega))$ and $\Tcal_\Ncal(R_\ast(\Omega))$ given by the finite intersections in \eqref{taylor_sets},
which, in light of Proposition \ref{prop:affine_lipschitz}, satisfy the relation
\begin{align*}
	\Tcal_\Mcal(R_\ast(\Omega)) \subset \Tcal_\Ncal(R_\ast(\Omega)) \subset \Fcal_\Mcal(\Omega,R_\ast).
\end{align*}
First, we prove Proposition \ref{prop:taylorbound}, showing that $\Tcal_\Ncal(R_\ast(\Omega))$ depends on at most three slip orientations, no matter the number of grains of the polycrystal $(\Omega, R_\ast)$; the same statement is valid for $\Tcal_\Mcal(R_\ast(\Omega))$, see Remark \ref{rem:taylor}\,$f)$ below. 

\begin{proof}[Proof of Proposition \ref{prop:taylorbound}]
	We first establish an explicit expression of the intersection $\Ncal \cap \Ncal R_\theta^T$ with $\theta\in (0, \pi)$.
	In light of \eqref{matrices_Ncal_s} for $s=e_1$, it holds that 
	$F\in\Ncal$ if and only if $F=S(\beta e_1|\tfrac{1}{\beta}e_2 + \gamma e_1)$ for some $S\in\SO(2),\beta\in(0,1],$ and $\gamma\in\R$.
	In order to characterize $\beta$ and $\gamma$ such that $FR_{\theta}\in \Ncal$, we observe that $\det(FR_{\theta}) = \det F = 1$ and that
	the constraint $|FR_{\theta} e_1|\leq 1$ can be rewritten as
	\begin{align}\label{quadratic_inequality}
		0\geq |FR_{\theta}e_1|^2 - 1 =\gamma^2\sin^2\theta + 2\gamma\beta \cos\theta\sin\theta +\frac{1}{\beta^2}\sin^2\theta + \beta^2 \cos^2\theta - 1.
	\end{align}
	As the right-hand side is quadratic in $\gamma$ with positive leading coefficient, it suffices to determine its zeroes to solve the inequality \eqref{quadratic_inequality}. 
	In doing so, we find that
	\begin{align}\label{image_compact}
		\Ncal \cap \Ncal R^T_\theta = \SO(2)\psi(\Lambda_\theta\big)
	\end{align}
	with $\psi : (0,1]\times \R\to \Ncal$ given by $(\beta,\gamma) \mapsto \big(\beta e_1|\tfrac{1}{\beta} e_2 + \gamma e_1 \big)$ and
	\begin{align}\label{plane_representation}
		\Lambda_\theta=\big\{(\beta,\gamma)\in \R^2 : \beta \in [\sin \theta,1], \gamma \in \Gamma(\theta, \beta)\big\},
	\end{align}
	where
	\begin{align}\label{def_interval}
    	\Gamma(\theta,\beta) = [\gamma_-(\theta,\beta),\gamma_+(\theta,\beta)]\subset \R \quad \text{with $\gamma_\pm(\theta,\beta) = -\beta\cot\theta \pm \sqrt{(\sin \theta)^{-2}-\beta^{-2}}$}
    \end{align}
    for $\theta\in (0,\pi)$ and $\beta\in [\sin \theta, 1]$.
    Moreover, note that $(1, 0)\in \Lambda_\theta$ for any $\theta\in (0, \pi)$ and that 
	\begin{align*}
		\Lambda_{\theta}=\{(1, 0)\} \quad \text{if and only if} \quad \theta=\frac{\pi}{2}.
	\end{align*}
	In light of \eqref{image_compact} and the injectivity of $\psi$, one immediately obtains through iteration that
	\begin{align}\label{intersections}
		\Tcal_\Ncal(R_\ast(\Omega)) = \Ncal \cap \Ncal R_{\theta_2}^T\cap\ldots\cap \Ncal R_{\theta_N}^T =  \SO(2)\psi(\Lambda_{\theta_2}\cap\ldots \cap \Lambda_{\theta_N}),
	\end{align}
	recalling that $\theta_1=0$. 
	
	In the following, we discuss the cases $n=N, n=1,$ and $n\notin\{1,N\}$ separately.  
	\medskip

	\textit{Step 1: The case $n=N$.} 
	The essence of this step is the observation that the sets $\Lambda_{\theta_i}$ for $i=1, \ldots, N$ are strictly decreasing nested sets, that is,
	\begin{align}\label{Acal_inclusions}
		\Lambda_{\theta_2} \supsetneq \ldots \supsetneq \Lambda_{\theta_N};
	\end{align}
	the representation \eqref{intersections} then simplifies to $\Tcal_\Ncal(R_\ast(\Omega))=\Ncal \cap \Ncal R_{\theta_N}^T$, which results in~\eqref{eq:taylor}, given that $\Ncal R_\pi^T = \Ncal$, cf.~also Remark~\ref{rem:taylor}\,$c)$.
	Since the sine function is strictly increasing in $(0,\frac{\pi}{2})$, it suffices for~\eqref{Acal_inclusions} to prove that
	\begin{align}\label{Gamma_inclusions}
		\Gamma(\theta,\beta)\subsetneq \Gamma(\tilde\theta,\beta)\quad  \text{ for any $\theta,\tilde \theta\in (0,\tfrac{\pi}{2})$ with $\tilde \theta<\theta$ and $\beta\in[\sin\theta,1]$.}
	\end{align}

	To this end, we show that $\partial_\theta \gamma_+(\theta,\beta) <0$ and $\partial_\theta \gamma_-(\theta,\beta) >0$  
	for all $\theta\in (0,\frac{\pi}{2})$ and $\beta\in(\sin\theta, 1)$, cf.~\eqref{def_interval}; the case $\beta=1$ is treated separately below.
A direct calculation yields that 
	\begin{align*}
		\partial_\theta \gamma_\pm (\theta,\beta) &= \partial_\theta\left(\beta\cot\theta \pm \sqrt{(\sin\theta)^{-2}-\beta^{-2}}\right)\\
			&= -\frac{1}{\sin^2\theta}\left(\beta \mp \frac{\cos\theta}{\sqrt{1-\beta^{-2}\sin^2\theta}}\right)\
			 = \frac{1}{\sin^2\theta\sqrt{1-\beta^{-2}\sin^2\theta}}\left(\sqrt{\beta^2-\sin^2\theta}\mp\cos \theta\right),
	\end{align*}
	where the first factor is clearly positive; the second factor is strictly increasing in $\beta$ on $(\sin\theta,1]$ with boundary values 
	\begin{align*}
		\sqrt{\beta^2-\sin^2\theta}\mp\cos \theta = \begin{cases}
			\mp \cos\theta &\text{ if } \beta = \sin\theta,\\
			\cos\theta\mp \cos\theta &\text{ if } \beta = 1,
		\end{cases}
	\end{align*}
	which implies the desired monotonicity of $\gamma_+(\theta,\beta)$ and $\gamma_-(\theta,\beta)$ in $\theta$.
	
	For $\beta=1$, we compute that
	\begin{align}\label{trivial_interval}
		\Gamma(\theta, 1) = \begin{cases}
			[-2\cot\theta,0] &\text{ if } \theta\in (0,\frac{\pi}{2}),\\
			[0,-2\cot\theta] &\text{ if } \theta \in [\frac{\pi}{2},\pi),
			\end{cases}
	\end{align}
	from which \eqref{Gamma_inclusions} follows immediately in that case.
	\medskip
	
	\textit{Step 2: The case $n=1$.}  Now, let $\frac{\pi}{2}\leq\theta_2< \ldots < \theta_N.$
	In this case, formula \eqref{intersections} reduces to $\Tcal_\Ncal(R_\ast(\Omega)) = \Ncal\cap \Ncal R_{\theta_2}^T$ since the chain of inclusions in \eqref{Acal_inclusions} is reversed. 
	The latter is due to the monotonicity of the sine function in $[\frac{\pi}{2},\pi)$, as well as the
	symmetry of the upper and lower bounds in \eqref{def_interval} around $\frac{\pi}{2}$
	in the sense that
	\begin{align}\label{symmetric_angles}
		\gamma_\pm(\tfrac{\pi}{2}+\theta,\beta) = - \gamma_\mp(\tfrac{\pi}{2}-\theta,\beta)
	\end{align}
	for $\theta\in(0,\frac{\pi}{2})$ and $\beta\in [\sin(\frac{\pi}{2}-\theta),1]$. 
	Precisely, combining \eqref{symmetric_angles} with the arguments of Step 1 yields that 
	\begin{align*}
		\Gamma(\theta,\beta) \subsetneq \Gamma(\tilde\theta,\beta)\quad  \text{ for any $\theta,\tilde\theta\in (\tfrac{\pi}{2},\pi)$ with $\theta < \tilde\theta$ and $\beta\in[\sin\theta,1]$.}
	\end{align*}
	\medskip
			
	\textit{Step 3: The case $n\notin\{1,N\}$.} Applying the conclusion of Step 1 to $\Ncal \cap\ldots \cap \Ncal R_{\theta_n}^T$  and the results of Step 2 to $\Ncal \cap \Ncal R_{\theta_{n+1}} \cap \ldots \cap \Ncal R_{\theta_N}$ gives
	\begin{align*}
		\Tcal_\Ncal(R_\ast(\Omega)) = \Ncal \cap \Ncal R_{\theta_2}^T\cap\ldots\cap \Ncal R_{\theta_n}^T \cap \Ncal R_{\theta_{n+1}}^T \cap \ldots \cap \Ncal R_{\theta_N}^T = \Ncal \cap \Ncal R_{\theta_n}^T \cap \Ncal R_{\theta_{n+1}}^T,
	\end{align*}
	as stated.
\end{proof}

\begin{remark}\label{rem:taylor}
Let $(\Omega,R_\ast)$ be as in Proposition \ref{prop:taylorbound} and $n\in\{1,\ldots,N\}$ with $\theta_n<\frac{\pi}{2}\leq \theta_{n+1}$.\medskip

	$a)$ Observe that $\SO(2)\subset \Tcal_\Ncal(R_\ast(\Omega))$ for any polycrystal $(\Omega,R_\ast)$. 
	Therefore, we call the Taylor bound $\Tcal_\Ncal(R_\ast(\Omega))$ trivial if $\Tcal_\Ncal(R_\ast(\Omega)) =\SO(2)$.\medskip	
	
	$b)$ The Taylor bound $\Tcal_\Ncal(R_\ast(\Omega))$ is polyconvex as the intersection of polyconvex hulls, and it is compact whenever $(\Omega, R_\ast)$ is not a single-crystal. One way to see this is via the representation formula \eqref{image_compact}-\eqref{def_interval}, where $\Tcal_\Ncal(R_\ast(\Omega))$ is expressed as the image of a compact set under a continuous map. 
	\medskip
	
	$c)$ If $\theta_k \leq \frac{\pi}{2}$ (or $\theta_k \geq \frac{\pi}{2}$) for all $k\in\{2,\ldots,N\}$, then only two values of $R_\ast$, precisely $R_{\theta_1}$ and $R_{\theta_N}$ (or $R_{\theta_1}$ and $R_{\theta_2}$), are sufficient for characterizing the Taylor bound, which follows directly from	Steps 1 and 2 in the proof of Proposition \ref{prop:taylorbound}. 
	This  observation  is in agreement with \eqref{eq:taylor} since $\Ncal R_{\theta_1}^T = \Ncal = \Ncal R_{\pi}^T = \Ncal R_{\theta_{N+1}}^T$.	
	\medskip
	
	$d)$ Proposition~\ref{prop:taylorbound} shows that the Taylor bound depends on at most three different slip orien\-tations. 
	Indeed, it involves the slip direction $s=e_1$, corresponding to $\theta_1=0$, and at most two others that are closest to $e_2$, see Figure \ref{fig:angle_choice}.
	For a more general setting without the restriction $\theta_1=0$, we refer to Remark \ref{rem:first_angle}.
	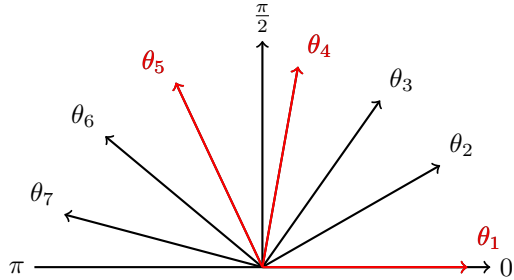
\begin{figure}[h!]
		\centering
		\begin{tikzpicture}
			\begin{scope}[scale=0.6]
				\draw [thick, ->] (-5,0)--(5,0);
				\draw (-5,0) node [anchor = east]{$\pi$};
				\draw (5,0) node [anchor = west]{$0$};
				\draw [thick, ->] (0,0)--(0,5);
				\draw (0,5) node [anchor = south]{$\frac{\pi}{2}$};
				
				\draw [->,thick](0,0) -- ++(0:4.5);
				\draw ($(0,0.2) + (0:4.5)$) node [anchor = south west] {$\theta_1$};
				\draw [thick, ->] (0,0) --++(30:4.5);
				\draw ($(0,0) + (30:4.5)$) node [anchor = south west] {$\theta_2$};
				\draw [thick, ->] (0,0) --++(55:4.5);
				\draw ($(0,0) + (55:4.5)$) node [anchor = south west] {$\theta_3$};
				\draw [->,thick] (0,0) --++(80:4.5);
				\draw ($(0,0) + (80:4.5)$) node [anchor = south west] {$\theta_4$};
				\draw [->,thick] (0,0) --++(115:4.5);
				\draw ($(0,0) + (115:4.5)$) node [anchor = south east] {$\theta_5$};
				\draw [thick, ->] (0,0) --++(140:4.5);
				\draw ($(0,0) + (140:4.5)$) node [anchor = south east] {$\theta_6$};
				\draw [thick, ->] (0,0) --++(165:4.5);
				\draw ($(0,0) + (165:4.5)$) node [anchor = south east] {$\theta_7$};

				\draw [->,thick,red](0,0) -- ++(0:4.5);
				\draw [->,thick,red] (0,0) --++(80:4.5);
				\draw [->,thick, red] (0,0) --++(115:4.5);
				\draw ($(0,0) + (80:4.5)$) node [anchor = south west, red] {$\theta_4$};	
				\draw ($(0,0.2) + (0:4.5)$) node [anchor = south west, red] {$\theta_1$};
				\draw ($(0,0) + (115:4.5)$) node [anchor = south east, red] {$\theta_5$};
			\end{scope}
		\end{tikzpicture}
		\caption{Dependence of the Taylor bound on exactly three slip directions (marked in red); 
		here, $n=4$ and $\Tcal_\Ncal(\{ R_{\theta_1},\ldots,R_{\theta_7}\}) = \Tcal_\Ncal(\{R_{\theta_1},R_{\theta_4},R_{\theta_5}\})$.}\label{fig:angle_choice}
	\end{figure}

	$e)$ Let $F\in \Ncal$ be represented by $F=S(\beta e_1|\tfrac{1}{\beta}e_2 + \gamma e_1)$ with $S\in\SO(2), \beta\in(0,1],$ and $\gamma\in\R$.
	As a consequence of the previous proof, one can extract the following useful equivalence for explicit calculations: In fact,  $F\in \Tcal_\Ncal(R_\ast(\Omega))$ if and only if
	\begin{align*}
		(\beta, \gamma)\in \Lambda_{\theta_{n}}\cap \Lambda_{\theta_{n+1}}
	\end{align*}
	or equivalently, 
	\begin{align*}
		 \beta\in [\max\{\sin \theta_n, \sin \theta_{n+1}\}, 1] \text{ and } \gamma\in \Gamma(\theta_n, \beta)\cap \Gamma(\theta_{n+1}, \beta),
	\end{align*}
	see also \eqref{plane_representation} and \eqref{def_interval}. 
	The sets $\Lambda_\theta$ for $\theta\in (0,\pi)$ (as well es their intersections) can be illustrated as in Figure \ref{fig:taylor}; in particular, this figure depicts both the properties that $\Lambda_{\frac{\pi}{2} +\theta}$ emerges from $\Lambda_{\frac{\pi}{2} - \theta}$ for $\theta\in(0,\frac{\pi}{2})$ via reflection (see the green and blue areas), as well as the nested structure in \eqref{Acal_inclusions} (see red and blue). 
	\medskip
	
	$f)$ Regarding the Taylor bound for the unrelaxed problem \eqref{unrelaxed}, we find that
	 $\Tcal_\Mcal(R_\ast(\Omega))$ is compact for any non-trivial polycrystal and that 
	\begin{align*}
		\Tcal_\Mcal(R_\ast(\Omega)) =\SO(2)
	\end{align*}
	if and only if $n\notin \{1, N\}$. It is evident from the calculations in the proof of Proposition \ref{prop:taylorbound} that
	\begin{align}\label{taylor_Mcal}
		\Mcal \cap \Mcal R_\theta^T = \SO(2)\psi\bigl(\{1\}\times\Gamma(1,\theta)\bigr)
	\end{align}
	for any $\theta\in (0,\pi)$, where $\Gamma(1,\theta)$ is given in \eqref{trivial_interval}. The identity \eqref{taylor_Mcal} can be visualized by considering the ``right boundary'' $\Lambda_{\theta}\cap(\{1\}\times \R)$ of $\Lambda_{\theta}$ in Figure \ref{fig:taylor}.
	\begin{figure}
		\includegraphics[scale=1]{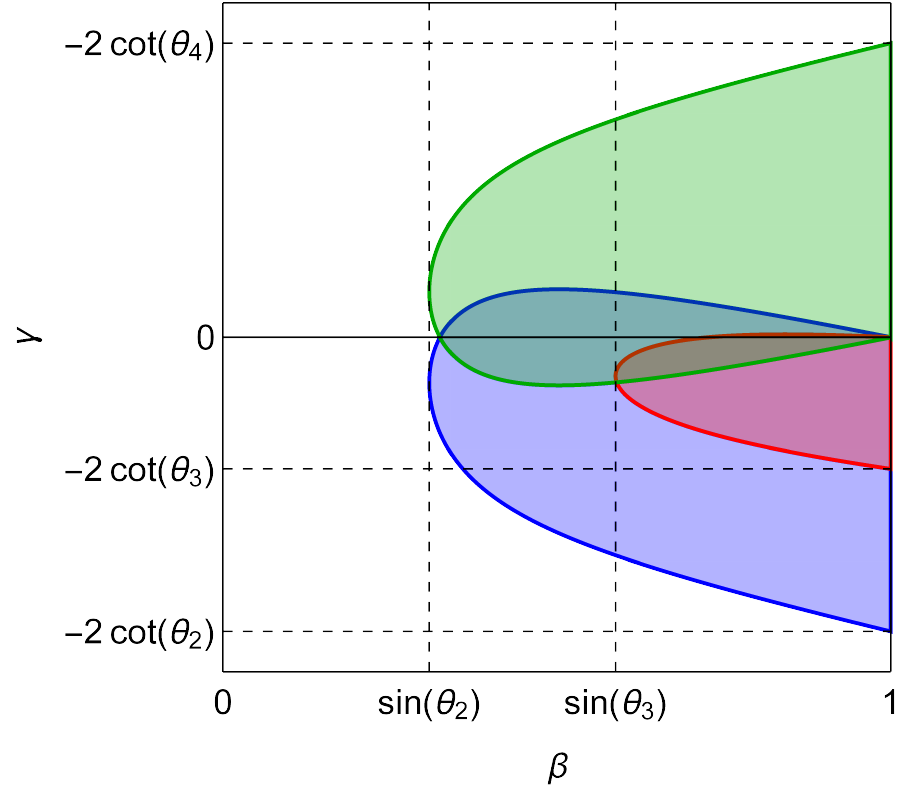}
		\caption{Illustration of sets $\Lambda_\theta$ for some choices of $\theta$. Here, the blue, red, and green areas describe $\Lambda_{\theta_2}$, $\Lambda_{\theta_3}$, and $\Lambda_{\theta_4}$,
		with $\theta_2=\frac{\pi}{10}, \theta_3 = \frac{2\pi}{10},$ and $\theta_4=\frac{9\pi}{10}$, respectively.}\label{fig:taylor}
	\end{figure}		
\end{remark}

Based on Proposition \ref{prop:taylorbound}, we now provide a necessary and sufficient condition on the orientations of a polycrystal $(\Omega, R_\ast)$ such that $\Tcal_\Ncal(R_\ast(\Omega))$ is trivial.
\begin{corollary}[Trivial Taylor bound]\label{cor:trivial_taylorbound}
	Let $(\Omega,R_\ast)$ be as in Proposition~\ref{prop:taylorbound}. 
	Then, the Taylor bound $\Tcal_\Ncal(R_\ast(\Omega))$ is trivial, i.e., $\Tcal_\Ncal(R_\ast(\Omega)) =\SO(2)$, if and only if there exists $n\in\{1,\ldots,N-1\}$ such that
	\begin{align*}
		\frac{\pi}{2} \in [\theta_n, \theta_{n+1}]\quad \text{and}\quad  \theta_{n+1}-\theta_n\leq \frac{\pi}{2}.
	\end{align*}
\end{corollary}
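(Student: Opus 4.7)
The plan is to apply Proposition~\ref{prop:taylorbound} to reduce the triviality question to at most three slip orientations, then translate the condition into an angular-coverage statement via a polar-decomposition argument, and finally match the resulting criterion against the corollary. By the proposition and the identification $\Ncal R_{\theta_i}^T = \Ncal_{s_i}$ for $s_i := R_{\theta_i}e_1$, we have $\Tcal_\Ncal(R_\ast(\Omega)) = \Ncal_{s_1}\cap\Ncal_{s_n}\cap\Ncal_{s_{n+1}}$, with $\theta_{N+1}=\pi$ (so $\Ncal_{s_{N+1}}=\Ncal_{s_1}$ collapses the edge case $n=N$ to two effective orientations $0$ and $\theta_N$).

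For any $F\in\Sl(2)$, consider the polar decomposition $F=RS$ with $R\in\SO(2)$ and $S$ symmetric positive definite of determinant one; $S$ has eigenvalues $\lambda,\lambda^{-1}$ (WLOG $\lambda\geq 1$) with orthonormal eigenbasis $(v,v^\perp)$, and $F\in\SO(2)$ iff $\lambda=1$. The constraint $|Fs_i|^2\leq 1$ reads $\lambda^2(s_i\cdot v)^2+\lambda^{-2}(s_i\cdot v^\perp)^2\leq 1$, which a short algebraic rearrangement rewrites as $\cos^2(\theta_i-\phi_v)\leq 1/(\lambda^2+1)$, where $\phi_v\in[0,\pi)$ is the angle of the line $\R v$. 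Since $1/(\lambda^2+1)\uparrow 1/2$ as $\lambda\downarrow 1$, the existence of some $F\in\Tcal_\Ncal(R_\ast(\Omega))\setminus\SO(2)$ is equivalent to the existence of $\phi_v\in[0,\pi)$ with $\max_{i\in\{1,n,n+1\}}\cos^2(\theta_i-\phi_v)<1/2$; equivalently, the closed arcs of radius $\pi/4$ centered at $\theta_1=0$, $\theta_n$, $\theta_{n+1}$ fail to cover the circle $\R/\pi\Z$ of circumference $\pi$.

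To finish, I would perform the gap analysis on this circle. For $n\in\{1,\ldots,N-1\}$ the three relevant orientations in $[0,\pi)$ produce gaps $\theta_n$, $\theta_{n+1}-\theta_n$, and $\pi-\theta_{n+1}$; the first is $<\pi/2$ since $\theta_n<\pi/2$, and the third is $\leq\pi/2$ since $\theta_{n+1}\geq\pi/2$, so the only binding condition is $\theta_{n+1}-\theta_n\leq\pi/2$, with $\pi/2\in[\theta_n,\theta_{n+1}]$ automatic from the definition of $n$. For the borderline $n=N$, the effective orientations are $\{0,\theta_N\}$ with $\theta_N<\pi/2$, so the wrap-around gap $\pi-\theta_N$ exceeds $\pi/2$ and the Taylor bound is never trivial; this matches the corollary's restriction $n\in\{1,\ldots,N-1\}$. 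The cases $\theta_n=0$ (when $n=1$, collapsing to two orientations $0,\theta_2$) and $\theta_{n+1}=\pi/2$ are absorbed into the same gap count and both agree with the stated condition. The main technical hurdle is the translation from $F\in\Tcal_\Ncal(R_\ast(\Omega))$ to the angular-coverage picture via polar decomposition; the remaining case analysis is combinatorial bookkeeping.
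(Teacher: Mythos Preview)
Your argument is correct and takes a genuinely different route from the paper. The paper works entirely in the explicit $(\beta,\gamma)$-parametrization of $\Ncal\cap\Ncal R_\theta^T=\SO(2)\psi(\Lambda_\theta)$ developed inside the proof of Proposition~\ref{prop:taylorbound}: for sufficiency it inserts a fictitious orientation $\theta_n+\tfrac{\pi}{2}$ and reduces to the orthogonal case $\Ncal\cap\Ncal R_{\pi/2}^T=\SO(2)$; for necessity it shows $\Lambda_{\theta_n}\cap\Lambda_{\theta_{n+1}}\supsetneq\{(1,0)\}$ by computing the sign of a derivative $d'(1)$ built from the boundary curves $\gamma_\pm(\theta,\beta)$. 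Your polar-decomposition reduction $|Fs_i|\leq 1 \Leftrightarrow \cos^2(\theta_i-\phi_v)\leq (\lambda^2+1)^{-1}$ bypasses that machinery and turns triviality into the clean statement that closed arcs of radius $\pi/4$ about $0,\theta_n,\theta_{n+1}$ cover $\R/\pi\Z$. This uses only the \emph{statement} of Proposition~\ref{prop:taylorbound}, not its internal coordinates, and the resulting gap analysis is transparent; the paper's approach, in exchange, yields explicit non-rotation witnesses $(\beta,\gamma)$ when the bound is non-trivial.

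One minor bookkeeping point: the corollary quantifies existentially over $n\in\{1,\ldots,N-1\}$, whereas you work with the specific index $n^\ast$ from Proposition~\ref{prop:taylorbound} (determined by $\theta_{n^\ast}<\tfrac{\pi}{2}\leq\theta_{n^\ast+1}$). These can differ only when some $\theta_m=\tfrac{\pi}{2}$, in which case the corollary's condition may also be witnessed by $n=m$; but then $n^\ast=m-1$ with $\theta_{n^\ast+1}-\theta_{n^\ast}=\tfrac{\pi}{2}-\theta_{m-1}\leq\tfrac{\pi}{2}$, so your criterion is still met. This is indeed routine, as you say, but worth one sentence to close the loop.
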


\begin{proof}
	\textit{Step 1: Sufficiency.} 
	If $\theta_{n}=\frac{\pi}{2}$ for some $n\in\{2, \ldots, N-1\}$, then any $F\in\Tcal_\Ncal(R_\ast(\Omega))$ satisfies
	\begin{align*}
		\det F=1,\quad |Fe_1|\leq 1,\quad |Fe_2|=|FR_{\theta_{n}}e_1|\leq 1. 
	\end{align*}	 
	Therefore, $F$ must be a rotation. Indeed, we estimate
	\begin{align*}
		1 = \det F = (Fe_1)^\perp\cdot(Fe_2)\leq |Fe_1||Fe_2|\leq 1,
	\end{align*}
	which implies that $|Fe_1| = |Fe_2| = 1$; in combination with $\det F=1$, this implies $F\in\SO(2)$.
	The same arguments can be used to treat the case $\theta_{n+1}=\frac{\pi}{2}$ for some $n\in \{1, \ldots, N-1\}$.

	It remains to show the claim for the case when
	$0<\theta_n < \frac{\pi}{2} < \theta_{n+1}  \leq \theta_n + \frac{\pi}{2}<\pi$ for $n\in \{2, \ldots, N-1\}$. 
	We apply Proposition~\ref{prop:taylorbound} twice, once to $(\Omega,R_\ast)$ and then again to another polycrystal $(\Omega, \tilde R_\ast)$ with $\tilde R_\ast(\Omega)= \{R_{\theta_1}, R_{\theta_n}, R_{\theta_{n+1}}, R_{\theta_n +\tfrac{\pi}{2}}\}$ to conclude that
	\begin{align*}
		\Tcal_\Ncal(R_\ast(\Omega)) = \Ncal\cap \Ncal R_{\theta_n}^T\cap \Ncal R_{\theta_{n+1}}^T = \Ncal\cap \Ncal R_{\theta_n}^T\cap \Ncal R_{\theta_{n+1}}^T \cap \Ncal R_{\theta_{n}+\frac{\pi}{2}}^T = \Tcal_\Ncal(\tilde R_\ast(\Omega)).
	\end{align*}
	The right-hand side is trivial since $\Ncal R_{\theta_n}^T\cap \Ncal R_{\theta_{n}+\frac{\pi}{2}}^T= (\Ncal \cap \Ncal R_{\frac{\pi}{2}}^T)R_{\theta_n}^T=\SO(2)$, which yields the statement.  \medskip

	\textit{Step 2: Necessity.} 
	Arguing by contraposition, assume first that either $\theta_k<\frac{\pi}{2}$ or $\theta_k> \frac{\pi}{2}$ for all $k\in\{2,\ldots, N\}$. 
	These two scenarios correspond exactly to those of Steps~1 and~2 in the proof of Proposition \ref{prop:taylorbound}, where it is shown that 
	\begin{align*}
		\Tcal_{\Ncal}(R_\ast(\Omega))
			= \SO(2)\psi(\Lambda_{\theta_N}) \quad\text{or}\quad
		\Tcal_{\Ncal}(R_\ast(\Omega))
			= \SO(2)\psi(\Lambda_{\theta_2}),
	\end{align*}
	respectively, cf.~\eqref{plane_representation}.  
	Since the sets $\Lambda_{\theta_N}$ and $\Lambda_{\theta_2}$ contain strictly more elements than $(1, 0)$ due to $\theta_2, \theta_N\neq \frac{\pi}{2}$ and since $\psi$ is injective
	with $\psi(\beta, \gamma)\in \SO(2)$ exactly for $(\beta, \gamma)=(1,0)$, it follows that $\SO(2)\subsetneq \Tcal_\Ncal(R_\ast(\Omega))$.
	
	Now it remains to address the case when there exists a $k\in \{2, \ldots, N-1\}$ such that $\theta_{k+1}-\theta_k>\frac{\pi}{2}$, or equivalently, 
	\begin{align}\label{angles_perp}
		0<\theta_k<\frac{\pi}{2}< \theta_k + \frac{\pi}{2} < \theta_{k+1}<\pi.
	\end{align}
	To show that $\Tcal_\Ncal(R_\ast(\Omega)) = \SO(2)\psi(\Lambda_{\theta_k}\cap \Lambda_{\theta_{k+1}})$ is strictly larger than the set of rotations, we show that there exists a $\beta\in [\max\{\sin\theta_k,\sin \theta_{k+1}\}, 1)$ such that
	\begin{align}\label{intersection}
		\Gamma(\theta_k,\beta)\cap\Gamma(\theta_{k+1},\beta)\neq \emptyset,
	\end{align} 
	see~\eqref{def_interval} for the definition of the intervals $\Gamma(\theta, \beta)$ and the associated boundary points $\gamma_\pm(\theta, \beta)$.
	This implies that $\Lambda_{\theta_k}\cap \Lambda_{\theta_{k+1}} \supsetneq \{(1,0)\}$, and hence, $\SO(2)\subsetneq \Tcal_\Ncal(R_\ast(\Omega))$ by the same arguments as above, as desired. 
	
	Finally, in order to verify~\eqref{intersection}, we observe first that
	\begin{align}\label{bound_comparison}
		\gamma_+(\theta_{k+1},\beta) \geq \gamma_-(\theta_k,\beta)
	\end{align} 
	for any $\beta\in [\max\{\sin\theta_k,\sin \theta_{k+1}\}, 1]$; 
	indeed, by the properties of the cotangent, the left-hand side is always non-negative and the right-hand side is non-positive, as $\theta_k \in (0,\frac{\pi}{2})$ and $\theta_{k+1} \in (\frac{\pi}{2},\pi)$ due to~\eqref{angles_perp}. 
	On the other hand, let  
	\begin{align*}
		d(\beta) := \gamma_+(\theta_k,\beta) - \gamma_-(\theta_{k+1},\beta) \text{ for }\beta\in [\max\{\sin\theta_k,\sin \theta_{k+1}\},1].
	\end{align*}
	From \eqref{trivial_interval}, one obtains immediately that $d(1)=0$, and $d'(1) <0$ follows from the calculation
	\begin{align*}
		d'(1) &= 
					\frac{\dd}{\dd \beta}\restrict{\beta=1} \big( \gamma_+(\theta_k,\beta) - \gamma_-(\theta_{k+1},\beta)\big)= -\cot \theta_k + \tan \theta_k + \cot \theta_{k+1} - \tan \theta_{k+1} \\
			&= \left(\cos \theta_{k+1}\cos\theta_k + \sin\theta_{k+1}\sin\theta_k\right)\left(\frac{1}{\sin\theta_{k+1}\cos\theta_k}-\frac{1}{\cos\theta_{k+1}\sin\theta_k}\right)\\
			&= -\frac{\sin(\theta_{k+1} -\theta_k)\cos(\theta_{k+1}-\theta_k)}{\sin\theta_{k+1}\sin\theta_k\cos\theta_{k+1}\cos\theta_k},
	\end{align*}
	under consideration of~\eqref{angles_perp}. One can therefore find a $\beta\in [\max\{\sin\theta_k,\sin \theta_{k+1}\},1)$ with $d(\beta)>0$, that is,
	\begin{align}\label{strict_bounds_inequality}
		\gamma_+(\theta_k,\beta)>\gamma_-(\theta_{k+1},\beta). 
	\end{align}
	Consequently, the combination of~\eqref{strict_bounds_inequality} with~\eqref{bound_comparison} gives that the intersection in~\eqref{intersection} is in fact trivial, which finishes the proof of the necessity.
\end{proof}

\begin{remark}
\label{rem:first_angle}
We continue with two brief comments on the assumption that $(\Omega,R_\ast)$ is a polycrystal as in Proposition~\ref{prop:taylorbound}. \medskip

	$a)$ Note that considering only orientations induced by rotations with angles in $[0, \pi)$ (see~\eqref{ordering}) is no real restriction, considering that plastic glide along the slip system is not uni-directed.
	Formally, we have that $\Ncal R_\theta^T = \Ncal R_{\theta\pm \pi}^T$ for any $\theta\in [0, \pi)$, and analogously, for $\Mcal$ in place of $\Ncal$. \medskip
	
	$b)$ The postulate in~\eqref{ordering} that the image $R_\ast(\Omega)$ contains the identity matrix can be made without loss of generality. 
	If $R_\ast(\Omega) = \{R_{\theta_1},\ldots,R_{\theta_N}\}$ for angles $\theta_1 <\ldots < \theta_N$ with $\theta_N-\theta_1<\pi$, but not necessarily $\theta_1=0$,	it is not hard to see that the results on the Taylor bounds in the case $\theta_1=0$ carry over to this more general setting.
	Indeed, with the new texture $\tilde{R}_\ast$ on $\Omega$ given by $\tilde{R}_\ast = R_\ast R_{\theta_1}^T$, we observe that $\tilde{R}_\ast(\Omega) = \{R_{\theta_1-\theta_1},\ldots,R_{\theta_N-\theta_1}\}$ and
	\begin{align*}
		\Tcal_\Ncal(\tilde{R}_\ast(\Omega)) = \bigcap_{x\in\Omega}\Ncal \tilde{R}_\ast(x)^T = \bigcap_{x\in\Omega} \Ncal R_{\theta_1}R^T_\ast(x) =\Bigl(\bigcap_{x\in\Omega} \Ncal R^T_\ast(x)\Bigr) R_{\theta_1} = \Tcal_\Ncal(R_\ast(\Omega)) R_{\theta_1};
	\end{align*}
	and the same for $\Mcal$ instead of $\Ncal$.
\end{remark}

To illustrate the findings of this section, we discuss implications for random polycrystals with uniformly distributed orientations. 
In particular, when the number of grains diverges, then it turns out that the Taylor bound for \eqref{relaxed} is trivial almost surely.
For an analysis of random polycrystals in the context of shape-memory alloys, see e.g., \cite{BaS05}.

\begin{example}[Randomized polycrystals]\label{ex:random_crystals}
	Let $(\Omega^k,R_\ast^k)$ for each $k\in\N$ be a polycrystal with at most $(k+1)$ grains. 
	In this example, the image $R_\ast^k(\Omega^k)$ is supposed to consist of the identity matrix $\Id$ and $k$ rotations $R_{\theta_1},\ldots,R_{\theta_{k}}$ with uniformly distributed angles $\theta_1, \ldots \theta_{k} \in(0,\pi)$;
	note that we do not impose any ordering of these angles. 
	
	The basic observation for our analysis is that $\Tcal_\Ncal(R_\ast^k(\Omega^k))=\SO(2)$ if $(\theta_1,\ldots,\theta_{k})\in (0,\pi)^k$ lies in
	\begin{align*}
		\Tcal_k:=\big \{(\theta_1,\ldots,\theta_{k}) \in (0,\pi)^k : \textstyle\bigcap_{i=1}^{k} \Ncal R_{\theta_i}^T \cap \Ncal = \SO(2)\big\}.
	\end{align*}
	With the help of Corollary \ref{cor:trivial_taylorbound} and the elementary calculations in Lemma \ref{lem:random_crystals_app}, we conclude that
	\begin{align*}
		\mu_k(\Tcal_k) = 1 - \frac{k+1}{2^k},
	\end{align*}
	where $\mu_k:=\frac{1}{\pi^k}\lambda_k$ and $\lambda_k$ is the $k$-dimensional Lebesgue measure.
	Hence, $\mu_k(\Tcal_k) \to 1$ as $k\to \infty$. This means that a myriad of grains with uniformly distributed slip directions renders the Taylor bound almost surely trivial.
\end{example}

We conclude this subsection with a comparison of the Taylor bounds and another geometry-independent inner approximation resulting from attainable affine boundary values of a canonically associated homogeneous problem, see \eqref{hom_cap} below. 
The basis of this discussion is the combination of our previous results on $\Tcal_\Mcal(R_\ast(\Omega))$ and $\Tcal_\Ncal(R_\ast(\Omega))$ with the well-established theory of homogeneous inclusions.

\begin{remark}[Alternative inner bound via a homogeneous inclusion]
	Instead of characterizing globally affine solutions to \eqref{unrelaxed} or \eqref{relaxed} as done to obtain the Taylor bounds, we now consider Lipschitz solutions to the homogeneous differential inclusion
	\begin{align*}
		\begin{cases}
			\displaystyle\nabla u \in \bigcap_{x\in\Omega} \Mcal R_\ast^T(x)=\Tcal_\Mcal(R_\ast(\Omega)) &\text{ a.e.~in } \Omega,\\
			u = Fx &\text{ on } \partial\Omega,
		\end{cases}\tag{$H_\Mcal$}\label{hom_cap}
	\end{align*}
	where $u\in W^{1,\infty}(\Omega;\R^2)$ is the unknown and $F\in\R^{2\times 2}$.
	Then, the set 
	\begin{align*}
		\Hcal(R_\ast(\Omega)) := \{F\in\R^{2\times 2} : \text{ there exists a solution $u\in W^{1,\infty}(\Omega;\R^2)$ to~\eqref{hom_cap}}\},
	\end{align*}
 	constitutes a geometry-independent inner bound for $\Fcal_\Mcal(\Omega,R_\ast)$, which, however, does not improve $\Tcal_\Ncal(R_\ast(\Omega))$ if $(\Omega, R_\ast)$ has more than one grain. 
	According to a classical result on differential inclusions as stated, e.g., in~\cite[Theorem 4.10]{Mue99} (applicable in view of the compactness of $\Tcal_\Mcal(R_\ast(\Omega))$ by Remark \ref{rem:taylor}\,$f)$) one has that 
	\begin{align*}
		\Hcal(R_\ast(\Omega)) \subset \Tcal_\Mcal(R_\ast(\Omega))^\qc \subset \Tcal_\Mcal(R_\ast(\Omega))^\pc \subset \Tcal_\Ncal(R_\ast(\Omega))^\pc=\Tcal_\Ncal(R_\ast(\Omega)),
	\end{align*}
	with the last identity due to Remark \ref{rem:taylor}\,$b)$.  
	The inclusion $\Hcal(R_\ast(\Omega)) \subset\Tcal_\Ncal(R_\ast(\Omega))$ is in general even strict.
	For example, if $R_\ast(\Omega) = \{\Id, R_{\frac{\pi}{6}},R_{\frac{5\pi}{6}}\}$, then $\Tcal_\Mcal(R_\ast(\Omega))$ is trivial by Remark \ref{rem:taylor}\,$f)$ so that $\Hcal(R_\ast(\Omega))=\SO(2)$, while $\Tcal_\Ncal(R_\ast(\Omega))\supsetneq \SO(2)$ by Corollary~\ref{cor:trivial_taylorbound}.
\end{remark}

\section{Outer bounds resulting from boundary grains}\label{sec:outer_bounds}
\subsection{Generalized Hadamard jump conditions}\label{sec:hadamard}
By the classical Hadamard jump condition, the gradients of any continuous and finitely piecewise affine function $u:\R^2\to \R^2$ need to be suitably rank-one connected.
More precisely, if the sets $\{\nabla u = A\}$ and $\{\nabla u = B\}$ for matrices $A,B\in\R^{2\times 2}$ are separated by a line with a normal $\nu\in\Scal^1$, then there exists some $a\in\R^2$ such that
\begin{align*}
	A-B = a\otimes \nu. 
\end{align*}
Following-up the seminal paper by Ball \& James~\cite{BaJ87}, this fundamental result has seen several further generalizations over the years.
For instance, in the case of Lipschitz maps, rank-one compatibility conditions between the polyconvex hulls of 
the sets of essential gradients (i.e., the smallest closed set containing all gradients up to a set of measure zero) on different sides of an interface are established in~\cite{BaC, IVV02}. 
The authors of~\cite{BaK14, BaM09} show under additional regularity assumptions, namely continuous differentiability up to the interfacial boundary or  locally bounded variation of the gradients, that the (approximate) gradients along the interface are rank-one connected pointwise (almost everywhere).
Recently, the Hadamard jump condition was investigated in the context of moving interfaces in~\cite{DeP19}; we also refer to this paper for a more detailed overview of the history of the problem.

The following result is a slight reformulation of \cite[Corollary 4]{IVV02} by Iwaniec, Verchota \& Vogel in the terminology of Definition~\ref{def:compatibility}, which has been suitably adapted to the needs of this paper using the translation- and rotation-invariance of the polyconvex hull of a set $\Bcal\subset\R^{2\times 2}$ in the sense that $(\Bcal R)^{\rm pc} =\Bcal^{\rm pc} R$ and $(\Bcal+A)^\pc=\Bcal^\pc + A$ for any $A\in \R^{2\times 2}$ and any $R\in \SO(2)$. 

\begin{theorem}[Generalized Hadamard jump condition for planar interfaces]\label{theo:hadamard_planar}
	Let $\Bcal \subset \R^{2\times 2}$ be closed, $A\in \R^{2\times 2}$, and $\nu\in\Scal^1$.
	If $u\in W^{1, \infty}(B(0,1);\R^2)$ satisfies 
	\begin{align*}
		\begin{cases}
			\nabla u\in \Bcal &\text{ a.e.~in } B^+_\nu(0,1),\\
			\nabla u = A &\text{ a.e.~in } B^-_\nu(0,1),
		\end{cases}		
	\end{align*}
	then $A$ is $\nu$-compatible with $\Bcal^\pc$.
\end{theorem}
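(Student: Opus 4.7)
The plan is to derive this as a reformulation of the classical generalized Hadamard jump condition proved by Iwaniec, Verchota and Vogel in \cite[Corollary~4]{IVV02}, exploiting the translation and rotation invariance of the polyconvex hull.

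First I would normalize the geometry. Pick $R\in\SO(2)$ with $R^Te_2=\nu$ and set $\tilde u(y)=u(R^Ty)$ on $B(0,1)$; then $\nabla\tilde u(y)=\nabla u(R^Ty)R^T$ takes values in $\Bcal R^T$ on $B^+_{e_2}(0,1)$ and equals $AR^T$ on $B^-_{e_2}(0,1)$. Via $(\Bcal R^T)^\pc=\Bcal^\pc R^T$, establishing $\nu$-compatibility of $A$ with $\Bcal^\pc$ is equivalent to establishing $e_2$-compatibility of $AR^T$ with $(\Bcal R^T)^\pc$. An additional translation $v(y)=\tilde u(y)-AR^Ty$ makes the gradient vanish below the interface and shifts the upper-half target to $(\Bcal-A)R^T$, and translation invariance of the polyconvex hull reduces the task to: show that $0$ is $e_2$-compatible with $\bigl((\Bcal-A)R^T\bigr)^\pc$.

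Next I would apply \cite[Corollary~4]{IVV02} to $v$ with target sets $K_+=(\Bcal-A)R^T$ above the flat interface and $K_-=\{0\}$ below. That result furnishes $B_+\in K_+^\pc$ and $B_-\in K_-^\pc$ with $B_+-B_-=a\otimes e_2$ for some $a\in\R^2$; since the polyconvex hull of a singleton is the singleton itself, $B_-=0$ and therefore $a\otimes e_2\in K_+^\pc=\Bcal^\pc R^T-AR^T$. Undoing translation and rotation yields $A+(a\otimes e_2)R\in\Bcal^\pc$, and since $(a\otimes e_2)R=a\otimes(R^Te_2)=a\otimes\nu$, this is precisely the claimed $\nu$-compatibility of $A$ with $\Bcal^\pc$.

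The main obstacle is to confirm that \cite[Corollary~4]{IVV02} genuinely delivers the polyconvex hull (rather than merely the quasiconvex one) and accommodates unbounded closed target sets, relying only on the essential boundedness coming from $\nabla u\in L^\infty$. Without that reference, I would attempt a direct proof via a blow-up and Jacobian argument: averaging the pair $(\nabla u,\det\nabla u)$ over thin rectangles $(-r,r)\times(0,h)$ on the upper side and using the divergence-form representation of $\det\nabla u$ together with the affine trace of $u$ on $\{x_2=0\}$ to pin the first column of the average gradient down to $Ae_1$ in the limit $h\to 0$. The limit points should then lie in $\overline{\co}\{(G,\det G):G\in\Bcal\}$, projecting back to a matrix of the form $A+a\otimes e_2$ in $\Bcal^\pc$. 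Controlling the contributions from the sides and top of the rectangle, where the trace of $u$ is Lipschitz but not affine, is the delicate step; one would likely have to take a family of rectangles with $h/r\to 0$ and argue that these boundary contributions are negligible in the averaged Jacobian.
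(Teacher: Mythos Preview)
Your proposal is correct and matches the paper's approach exactly: the paper does not give a standalone proof but simply records the theorem as a reformulation of \cite[Corollary~4]{IVV02} via the translation- and rotation-invariance of the polyconvex hull, which is precisely the reduction you carry out in detail. Your caution about unbounded $\Bcal$ is easily resolved by intersecting with a ball containing the essential range of $\nabla u$, so no direct blow-up argument is needed here.
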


The next theorem can be seen in turn as a special case of the work by Ball \& Carstensen \cite{BaC}, often cited in the literature, e.g., in~\cite{BaC99, BaC17, BaK14, BaM09}.  
However, to the best of our knowledge, the reference~\cite{BaC} has not yet been published, which is why we include here a detailed proof for the reader's convenience. 
The overall strategy \cite{Bal18} combines a blow-up argument with Theorem~\ref{theo:hadamard_planar}.

\begin{theorem}[Generalized Hadamard jump condition for curved interfaces]\label{theo:hadamard_curved}
	Let $U\subset \R^2$ be a bounded Lipschitz domain such that $\overline{U} = \overline U_1 \cup \overline U_2$ for two disjoint Lipschitz domains $U_1,U_2\subset U$ with interface $\Gamma :=\partial U_1 \cap \partial U_2 \cap U$.
	Further, let $\Bcal\subset \R^{2\times 2}$ be a closed set, $A\in\R^{2\times 2}$, and suppose that $x_0\in \Gamma$ is a point where the outer unit normal $\nu(x_0)$ of $U_1$ exists.
	If there is a function $u\in W^{1,\infty}(U;\R^2)$ with 
	\begin{align*}
		\begin{cases}
			\nabla u \in \Bcal &\text{ a.e.~in $U_2$,}\\
			\nabla u = A &\text{ a.e.~in $U_1$,}
		\end{cases}		
	\end{align*}
	then $A$ is $\nu(x_0)$-compatible with $\Bcal^\pc$.	
\end{theorem}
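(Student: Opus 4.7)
The plan is a blow-up argument around $x_0$ that reduces the curved-interface setting to the planar-interface case of Theorem \ref{theo:hadamard_planar}. For $r>0$ small enough that $B(x_0,r)\subset U$, I would introduce the rescalings
\[
u_r(y) := \frac{u(x_0+ry) - u(x_0)}{r},\qquad y\in B(0,1),
\]
for which $\nabla u_r(y) = \nabla u(x_0+ry)$ inherits the uniform $L^\infty$-bound of $\nabla u$. A combination of Arzelà-Ascoli and Banach-Alaoglu then yields a subsequence (not relabeled) such that $u_r \to u_0$ uniformly on $B(0,1)$ and $\nabla u_r \weaklystar \nabla u_0$ in $L^\infty(B(0,1);\R^{2\times 2})$ for some $u_0 \in W^{1,\infty}(B(0,1);\R^2)$.

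Next, I would identify the gradient of $u_0$ on each side of the interface. The existence of the outer unit normal $\nu(x_0)$ of $U_1$ at $x_0$ guarantees that the rescaled sets $(U_1-x_0)/r \cap B(0,1)$ and $(U_2-x_0)/r \cap B(0,1)$ converge in $L^1$-measure to $B^-_{\nu(x_0)}(0,1)$ and $B^+_{\nu(x_0)}(0,1)$, respectively. Since $\nabla u_r=A$ on the rescaled image of $U_1$, testing against compactly supported functions on $B^-_{\nu(x_0)}(0,1)$ and combining this set convergence with the uniform bound on $\nabla u_r$ gives $\nabla u_0=A$ a.e.~on $B^-_{\nu(x_0)}(0,1)$. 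The analogous step on the other side, i.e., propagating $\nabla u_r \in \Bcal$ on the rescaled image of $U_2$ to $\nabla u_0 \in \Bcal^\pc$ a.e.~on $B^+_{\nu(x_0)}(0,1)$, rests on the standard relaxation fact that weak-$*$ $L^\infty$ limits of gradients taking values in a closed set $\Bcal$ take values in $\Bcal^\pc$ almost everywhere; this follows from Jensen's inequality applied to polyconvex test functions via the underlying gradient Young measure, together with the weak-$*$ continuity of $\det\nabla u_r$.

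To conclude, I would note that $\Bcal^\pc$ is closed (since polyconvex functions $\R^{2\times 2}\to\R$ are continuous) and invoke Theorem \ref{theo:hadamard_planar} with the closed target set $\Bcal^\pc$ in place of $\Bcal$; this yields $\nu(x_0)$-compatibility of $A$ with $(\Bcal^\pc)^\pc=\Bcal^\pc$, which is the claim. The main obstacle in this scheme is precisely the passage from the pointwise inclusion $\nabla u_r \in \Bcal$ to $\nabla u_0 \in \Bcal^\pc$ under the weak-$*$ limit: the generalized convex hull cannot be avoided here, since weak-$*$ limits of gradients constrained to $\Bcal$ need not themselves lie in $\Bcal$, only in its polyconvex (equivalently, quasiconvex) envelope.
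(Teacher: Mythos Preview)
Your proposal is correct and follows essentially the same blow-up-plus-Young-measure route as the paper, which differs only in explicitly intersecting $\Bcal$ with a compact set $K$ containing all the rescaled gradients and passing through the compact set $(\Bcal\cap K)^{\qc}$ before invoking Theorem~\ref{theo:hadamard_planar}, rather than going directly to $\Bcal^{\pc}$ as you do. One small slip: your parenthetical ``(equivalently, quasiconvex)'' in the final paragraph is false in general, though it is harmless for the argument since $\Bcal^{\qc}\subset\Bcal^{\pc}$ is all that is used.
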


\begin{proof}
For $\eps>0$ sufficiently small, consider
\begin{align*}
v_\epsilon: B(0,1)\to \R^2, \quad v_\epsilon(x) = \frac{1}{\epsilon}\bigl(u(x_0+\epsilon x)-u(x_0)\bigr),
\end{align*}
observing that $\nabla v_\epsilon = \nabla u(x_0+\epsilon \,\cdot)$ and $v_\epsilon(0) = 0$. 
Since $u\in W^{1,\infty}(U;\R^2)$, the sequence $(v_\epsilon)_\epsilon$ is bounded in $W^{1,\infty}(B(0,1);\R^{2})$. Consequently, there exists a compact set $K\subset \R^{2\times 2}$ such that $\nabla v_\eps \in K$ a.e.~in $B(0,1)$, and one can extract a subsequence of $(v_\eps)_\eps$ (not relabeled) such that
\begin{align}\label{weakstar_lipschitz}
	v_\eps \weaklystar v \quad \text{ in } W^{1,\infty}(B(0,1);\R^2)
\end{align}
with $v\in W^{1,\infty}(B(0,1);\R^2)$. 
With the transformation $\psi_\eps: B(x_0,\eps)\to B(0,1),\: x\mapsto \frac{1}{\eps}(x- x_0)$, one has by assumption that $\nabla v_\eps\in \Bcal\cap K$ a.e.~in $\psi_\eps(U_1 \cap B(x_0, \eps))$ and $\nabla v_\eps=A$ a.e.~in $\psi_\eps(U_2\cap B(x_0, \eps))$.
Next, we prove that
\begin{align}\label{measure_conv1}
	\begin{split}
		\dist(\nabla v_\eps, \Bcal\cap K) \to 0 \quad &\text{ in measure on } B^+_{\nu(x_0)}(0,1),\\
		|\nabla v_\eps - A| \to 0 \quad &\text{ in measure on } B^-_{\nu(x_0)}(0,1)
	\end{split}
\end{align}
as $\eps\to 0$. With
$S_\eps:=(U_1\triangle B_{\nu(x_0)}^-(x_0, \eps)) \cap B(x_0, \eps)= (U_2 \triangle B_{\nu(x_0)}^+(x_0, \eps)) \cap B(x_0, \eps)$, where $\triangle$ denotes the symmetric difference between two sets (see Figure \ref{fig:graph}), the convergences in~\eqref{measure_conv1} follow immediately from
\begin{align}\label{psieps}
	|\psi_\eps(S_\eps)|\to 0 \quad \text{as $\eps\to 0$.}
\end{align} 
To see the latter, recall that the interface $\Gamma$ is locally the graph of a Lipschitz function and the unit outer normal $\nu(x_0)$ of $U_1$ exists, so that
\begin{align*}
	\Gamma\cap B(x_0,\eps) = \{x_0  - t \nu(x_0)^\perp + g(t)\nu(x_0) : t\in (-\eps,\eps)\}\cap B(x_0,\eps)
\end{align*} 
with $g: (-\eps, \eps) \to \R$ Lipschitz continuous satisfying $g(0)=0$ and $g'(0) = 0$. Then, 
\begin{align*}
	\big|\psi_\eps(S_\eps) \bigr|=( \det \nabla \psi_\eps)\, |S_\eps| \leq \eps^{-2} \int_{-\eps}^\eps |g(t)| \dd t \leq \sup_{t\in (-\eps,\eps)} \left|\frac{g(t)}{t}\right|
 \to 0 \quad\text{ as } \eps \to 0, 
\end{align*}
implying~\eqref{psieps}. 

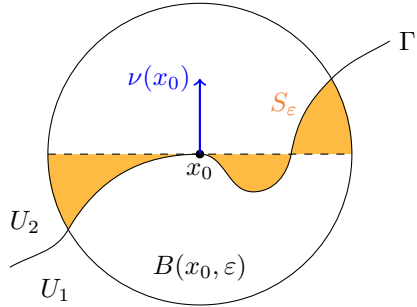
\begin{figure}[h!]
	\centering
	\begin{tikzpicture}
		\coordinate (a) at ({-2*cos(30)},{-2*sin(30)});
		\coordinate (b) at ({2*cos(30)},{2*sin(30)});
		\fill [Dandelion](a) to [out=60,in=180] (0,0) to [out=0,in=180] (0.7,-0.5) to [out=0,in=260] (1.2,0) to [out=180,in=0] (-2,0) to [out=-90,in=120] (a);
		\fill [Dandelion] (1.2,0) to [out=80,in=225] (b) to [out=-60,in=90] (2,0) to cycle;	
		\draw (0,0) circle (2);
		\draw (0,-1.8) node [anchor=south] {$B(x_0,\eps)$};
		\draw [dashed] (-2,0) -- (2,0);
		\draw [fill=black] (0,0) circle (0.05);
		\draw (0,0) node [anchor=north] {$x_0$};
		\draw [->,blue,thick] (0,0) -- (0,1);
		\draw (0,1) node [blue,anchor=east] {$\nu(x_0)$};
		\draw (-2.5,-1.5) to [out=30,in=240] (a) to [out=60,in=180] (0,0) to [out=0,in=180] (0.7,-0.5) to [out=0,in=260] (1.2,0) to [out=80,in=225] (b) to [out=45,in=210] (2.5,1.5);
		\draw (0.8,0.4) node [Orange,anchor=south west] {$S_\eps$};
		\draw (-2.3,-0.9) node {$U_2$};
		\draw (-1.9,-1.8) node {$U_1$};
		\draw (2.5,1.5) node [anchor=west] {$\Gamma$};
	\end{tikzpicture}
	\caption{An illustration of the interface $\Gamma$ in a neighborhood of $x_0$, where the orange area depicts the set $S_\eps$.}\label{fig:graph}
\end{figure}

The remaining proof uses classical arguments from Young measure theory, see, e.g.,~\cite{FoL07, Mue99, Rin18} for general introductions. 
If  $\{\mu_x\}_{x\in B(0,1)}$ is the gradient Young measure generated by a (non-relabeled) subsequence of $(\nabla v_\eps)_\eps$, we infer along with~\eqref{measure_conv1}, 
 \begin{align}\label{support_GYM}
	\begin{split}
		\supp \mu_x \subset \Bcal \cap K &\quad \text{ for a.e.~} x\in B^+_{\nu(x_0)}(0,1),\\
		\supp \mu_x = \{A\}&\quad\text{ for a.e.~} x\in B^-_{\nu(x_0)}(0,1),
	\end{split}
\end{align}
and it holds for the barycenters of $\mu_x$ that $\int_{\R^{2\times 2}} M \dd \mu_x(M)=\nabla v(x)$ for a.e.~$x\in B(0,1)$ in view of \eqref{weakstar_lipschitz}. 
Since the quasiconvex hull $(\Bcal\cap K)^\qc$ consists of the barycenters of homogeneous gradient Young measures with support in $\Bcal\cap K$, and $\mu_x$ is a homogeneous gradient Young measure itself  for almost every $x$, it follows from~\eqref{support_GYM} that
\begin{align*}
\begin{cases}
	\nabla v \in (\Bcal\cap K)^\qc & \text{ a.e.~in } B^+_{\nu(x_0)}(0,1),\\
	\nabla v= A & \text{ a.e.~in } B^-_{\nu(x_0)}(0,1).
	\end{cases}
\end{align*}

Finally, the statement follows from Theorem \ref{theo:hadamard_planar}, according to which $A$ is $\nu(x_0)$-compatible with $((\Bcal\cap K)^\qc)^\pc\subset \Bcal^\pc$, noting in particular that $(\Bcal\cap K)^\qc$ is compact as the quasiconvex hull of a compact set. 
\end{proof}

\subsection{Application to polycrystals}\label{sec:application_outer}
We now apply the results from Section \ref{sec:hadamard} on rank-one compatibility along curved interfaces to the boundary grains of the polycrystal $(\Omega,R_\ast)$. 
Therefore, in light of~\eqref{Sl2_bound}, let us introduce the set
\begin{align}\label{TdNcal}
	\Tcal^\partial_\Ncal(\Omega,R_\ast) := \{F\in \Sl(2): F \text{ is $\nu(x)$-compatible with $\Ncal R^T_\ast(x)$ for every $x\in \bdpoly$}\},
\end{align}
with $\nu$ the outer unit normal of $\Omega$ and $\bdpoly$ as in~\eqref{bdpoly_def}; the texture $R_\ast$ is canonically extended to $\partial\Omega$ except in the boundary dual points, cf.~\eqref{bddual_def}.
Applying Theorem \ref{theo:hadamard_curved} locally around every $x\in \bdpoly$ yields the outer bound
\begin{align*}
	\Fcal_\Mcal(\Omega,R_\ast)\subset \Tcal^\partial_\Ncal(\Omega,R_\ast),
\end{align*}
considering that $(\Mcal R_\ast^T(x))^\pc = (\Mcal_{R_\ast(x) e_1})^\pc = \Ncal_{R_\ast(x) e_1} = \Ncal R^T_\ast(x)$.
\medskip

Notice that requiring the rank-one connectedness at boundary dual points does not improve the outer bound \eqref{TdNcal}. Suppose that $x\in \partial \Omega$ is a boundary dual point where exactly two neighboring grains, say $\Omega_k$ and $\Omega_l$, meet.  
If $\nu(x)$ exists, then Theorem \ref{theo:hadamard_curved} along with Proposition \ref{prop:qc_hull} and~\eqref{twograins} shows that any element of $\Fcal_\Mcal(\Omega, R_\ast)$ is $\nu(x)$-compatible with
\begin{align*}
	\Big(\Mcal R_\ast^T\restrict{\Omega_k}\cup \Mcal R_\ast^T\restrict{\Omega_l}\Big)^\pc = \Sl(2),
\end{align*}
which is a trivial statement since $\Sl(2)$ is already an outer bound for all polycrystals. 
In case that more than two grains meet in $x$, the argument is analogous. 
\medskip

A (possibly) larger outer bound on the set of attainable macroscopic strains of the polycrystal is $\Tcal^\perp_\Ncal(\Omega,R_\ast)$, which we define as in~\eqref{TdNcal}, but with 
\begin{align*}
\bdperp=\{x\in \bdpoly : \nu(x)\cdot R_\ast(x)e_1=0 \}\end{align*} 
in place of $\bdpoly$. 
It is easier to characterize than $\Tcal^\partial_\Ncal(\Omega,R_\ast)$ in practice since it accounts for rank-one compatibility only at specific boundary points where an orthogonality condition between the outer unit normal and the slip orientations is satisfied.
Even though the inclusion
\begin{align*}
	\Tcal^\partial_\Ncal(\Omega,R_\ast)\subset\Tcal^\perp_\Ncal(\Omega,R_\ast)
\end{align*}
is in general strict, as shown in Example \ref{ex:bicrystals}\,$a)$, it is possible to provide a sufficient geometric condition on the polycrystal that ensures the equality.
The next statement, a refinement of Proposition \ref{prop:taylor_bdr_intro} and a direct consequence of Remark~\ref{rem:rank_one}, gives more insight into the relation between these outer bounds and $\Ncal R_\ast^T$. 

\begin{proposition}\label{prop:taylor_bdr}
	Let $\Omega_1,\ldots, \Omega_M$ for $M\in\N$ be the boundary grains of the polycrystal $(\Omega,R_\ast)$ and let $J =\{i\in\{1,\ldots,M\}: \bdperp\cap \partial\Omega_i \neq \emptyset\}$.
	\smallskip
	
	$a)$ If $J=\{1,\ldots,M\}$, then it holds that $\Tcal^\partial_\Ncal(\Omega,R_\ast) = \Tcal^\perp_\Ncal(\Omega,R_\ast)$. 
	\smallskip
	
	$b)$  If $J\neq\emptyset$, then
	 \begin{align*}
		\Tcal^\perp_\Ncal(\Omega, R_\ast) = \bigcap_{i\in J} \Ncal R^T_\ast\restrict{\Omega_i}.
	\end{align*}	 

	$c)$ If $J\cup J'\neq \emptyset$ with $J'=\{i\in \{1, \ldots, M\}: \overline{\{\pm \nu(x) : x\in \bdpoly\cap\partial\Omega_i \}} =\Scal^1\}$, then
	\begin{align}\label{taylor_bdr2}
 		\Tcal^\partial_\Ncal(\Omega,R_\ast) \subset \bigcap_{i\in J\cup J'} \Ncal R^T_\ast\restrict{\Omega_i}.
	\end{align}
\end{proposition}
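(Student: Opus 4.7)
The plan is to reduce all three statements to the characterizations of $\nu$-compatibility with the sets $\Ncal_s$ established in Lemma~\ref{lem:rank-one} and Remark~\ref{rem:rank_one}, paired with one simple topological observation: since boundary dual points are explicitly excluded from $\bdpoly$ by~\eqref{bdpoly_def}, each $x\in\bdpoly$ lies in the boundary of exactly one boundary grain $\Omega_i$, so that $R_\ast(x)=R_\ast\restrict{\Omega_i}$ unambiguously.

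I would begin with part (b), which is essentially an unfolding of definitions. For any $x\in\bdperp$ lying on a boundary grain $\Omega_i$, the constraint $\nu(x)\cdot R_\ast(x)e_1=0$ forces $\nu(x)=\pm(R_\ast\restrict{\Omega_i}e_1)^\perp$, i.e., the outer normal coincides, up to sign, with the slip-plane normal of $\Omega_i$. Remark~\ref{rem:rank_one}\,$a)$ then identifies $\nu(x)$-compatibility of $F$ with $\Ncal R_\ast^T\restrict{\Omega_i}$ as plain membership $F\in\Ncal R_\ast^T\restrict{\Omega_i}$, and different points of $\bdperp$ lying on the same grain produce identical constraints. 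Collecting these over $i\in J$ yields the intersection formula in (b).

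For part (a), the inclusion $\Tcal^\partial_\Ncal\subset\Tcal^\perp_\Ncal$ is immediate from $\bdperp\subset\bdpoly$. For the converse, fix $F\in\Tcal^\perp_\Ncal$; under the hypothesis $J=\{1,\dots,M\}$, part (b) already gives $F\in\Ncal R_\ast^T\restrict{\Omega_i}$ for every boundary grain. For an arbitrary $x\in\bdpoly$, pick the unique grain $\Omega_i$ with $x\in\partial\Omega_i$ and set $s=R_\ast(x)e_1$. The case $\nu(x)=\pm s^\perp$ is handled directly by Remark~\ref{rem:rank_one}\,$a)$; in the remaining case $s\cdot\nu(x)\neq 0$, Lemma~\ref{lem:rank-one} reduces $\nu(x)$-compatibility to the inequality~\eqref{eq:rank-one} at the specific value $t=(s\cdot\nu(x)^\perp)/(s\cdot\nu(x))$, which is satisfied for \emph{every} $t\in\R$ whenever $|Fs|\le 1$, i.e., whenever $F\in\Ncal_s$.

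Part (c) follows the same bookkeeping, now using the density assumption defining $J'$. Indices $i\in J$ are already dealt with by (b). For $i\in J'$, set $s=R_\ast\restrict{\Omega_i}e_1$ and note that $\{\pm\nu(x):x\in\bdpoly\cap\partial\Omega_i\}$ is dense in $\Scal^1$ by assumption; discarding the at most two tangential directions $\pm s^\perp$ leaves a dense subset. Since $F\in\Tcal^\partial_\Ncal$ is $\nu$-compatible with $\Ncal_s$ along this dense set, Lemma~\ref{lem:rank-one} translates these conditions into~\eqref{eq:rank-one} for a dense set of parameters $t\in\R$, and the density argument of Remark~\ref{rem:rank_one}\,$b)$ then forces $|Fs|\le 1$, hence $F\in\Ncal R_\ast^T\restrict{\Omega_i}$. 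No deep obstacle arises; the only point requiring mild care throughout is the case distinction between the tangential directions $\pm s^\perp$ (handled by Remark~\ref{rem:rank_one}\,$a)$) and the non-tangential ones (governed by Lemma~\ref{lem:rank-one}), which is precisely why the statement singles out $J$ and $J'$ in the first place.
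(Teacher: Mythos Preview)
Your proposal is correct and matches the paper's approach: the paper omits a proof and simply declares the proposition a ``direct consequence of Remark~\ref{rem:rank_one}'', and your argument is precisely the unpacking of that remark. One minor redundancy: in part~(a), your detour through Lemma~\ref{lem:rank-one} to verify~\eqref{eq:rank-one} for $\beta\le 1$ is already subsumed by Remark~\ref{rem:rank_one}\,$b)$ applied with $\Dcal=\Scal^1$, which gives directly that $F\in\Ncal_s$ implies $\nu$-compatibility with $\Ncal_s$ for every $\nu\in\Scal^1$.
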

While $J$ selects the boundary crystals $\Omega_i$ whose outer unit normal $\nu$ to $\Omega$ is orthogonal to the associated slip direction at some point, the index set $J'$ identifies strongly curved boundary grains, where the image of $\pm \nu$ is dense in $\Scal^1$. 
In Example \ref{ex:bicrystals}, the combination of both these index sets enables a full characterization of $\Fcal_\Mcal(\Omega,R_\ast)$.
\medskip

\begin{remark}[Higher regularity of gradients] 
	Let us comment on how requiring higher regularity for the solutions to~\eqref{unrelaxed} affects the rank-one compatibility conditions at the boundary grains, and thus, the corresponding outer bounds for the effective strains.
	The answer depends on the geometry of the boundary grains of the polycrystal. 

	To be precise, let $\Omega$ be a $C^1$-domain and suppose that for $F\in\Fcal_\Mcal(\Omega,R_\ast)$ there is a solution $u\in C^1(\overline{\Omega};\R^2)$ to \eqref{unrelaxed}.
	It then follows from \cite[Theorem 3.2]{BaK14} that $F$ lies in
	\begin{align*}
		\Tcal^\partial_\Mcal(\Omega,R_\ast) :=\{F\in\Sl(2): \text{$F$ is $\nu(x)$-compatible with $\Mcal R_\ast^T(x)$ for every $x\in\bdpoly$}\}. 
	\end{align*}
	
 	If $\bdperp=\emptyset$, one observes that $\Tcal^\partial_\Mcal(\Omega,R_\ast)= \Tcal^\partial_\Ncal(\Omega,R_\ast)$ due to $(i)\Leftrightarrow(ii)$ in Lemma \ref{lem:rank-one}, meaning that the added regularity does not sharpen the outer bound in this case. 
 
	On the other hand, if there exists $x\in\bdperp$, Remark \ref{rem:rank_one} implies that $F\in \Mcal R_\ast^T(x)$, instead of merely $F\in \Ncal R_\ast^T(x)$.
	A comparison of Remark \ref{rem:taylor}\,$f)$ and Corollary~\ref{cor:trivial_taylorbound} therefore underlines that the polycrystal behaves more rigidly under the assumption of higher regularity. 
\end{remark}

Another natural outer bound for $\Fcal_\Mcal(\Omega,R_\ast)$ can be derived dropping the gradient structure in the differential inclusion in~\eqref{unrelaxed}; for an analogous approach in the context of polycrystalline shape-memory alloys, see also~\cite[page 125]{BaK97}. 
Yet, it turns out that this bound is trivial due to the unboundedness of $\Mcal$, and hence, unfit to improve $\Tcal^\partial_\Ncal(\Omega,R_\ast)$ and $\Tcal^\perp_\Ncal(\Omega,R_\ast)$.

\begin{remark}\label{rem:curl-free}
	Let us consider the inclusion problem
	\begin{align*}
		\begin{cases}
			U(x) \in \Mcal R_\ast^T(x) &\text{ for a.e.~} x\in \Omega,\\
			\int_\Omega U(x) \dd x = F|\Omega|,
		\end{cases}\tag{$C_\Mcal$}\label{no_gradient}
	\end{align*}
	with the unknown $U\in L^\infty(\Omega;\R^{2\times 2})$ and $F\in\R^{2\times 2}$, which arises from \eqref{unrelaxed} in disregard of the gradient structure. 
	We observe that
	\begin{align*}
		\Ccal_\Mcal(\Omega,R_\ast) = \big\{F\in \R^{2\times 2} : \text{ there exists a solution $U\in L^\infty(\Omega;\R^2)$ to }\eqref{no_gradient} \big\} 
	\end{align*}
constitutes an outer bound of $\Fcal_\Mcal(\Omega,R_\ast)$, given that \eqref{no_gradient} clearly admits a larger class of solutions than \eqref{unrelaxed}. 
	However, under the assumption that $(\Omega,R_\ast)$ is not a single crystal, it holds that
	\begin{align*}
		\Ccal_\Mcal(\Omega,R_\ast) =  \R^{2\times 2}.
	\end{align*}
	This follows from
		\begin{align}\label{bound_Fcal}
		\R^{2\times 2} = \sum_{i=1}^N \frac{|\Omega_i|}{|\Omega|}\Mcal_{R_\ast\restrict{\Omega_i}e_1}^{\rm co}  = \sum_{i=1}^N \frac{|\Omega_i|}{|\Omega|}(\Mcal R_\ast^T\restrict{\Omega_i})^{\rm co} \subset \Ccal_\Mcal(\Omega,R_\ast),
	\end{align}
	where $(\cdot)^{\rm co}$ denotes the convex hull and $\Omega_1,\ldots,\Omega_N\subset\Omega$ with $N\geq 2$ are the grains of $\Omega$. 
	The first identity in \eqref{bound_Fcal} relies on the simple fact that $\Mcal_s^{\rm co} = \{F\in\R^{2\times 2} : |Fs|\leq 1\}$ for any $s\in\Scal^1$, while the last inclusion is based on standard convexity arguments; essentially, it suffices to employ Carath\'eodory's theorem in combination with a suitable refinement of the partition given by the grains.
\end{remark}

In the following, we present examples of polycrystals for which the set of attainable macroscopic strains $\Fcal_\Mcal(\Omega,R_\ast)$ can be fully characterized with the help of the previous concepts. The analysis of polycrystalline structures with a symmetric material response and their rigidity is followed up by a brief discussion of selected bicrystals.

\begin{example}[Polycrystals with sufficient symmetry]\label{ex:symmetric_outer}
	In analogy to \cite{BaK97, KoN00}, we say that $(\Omega,R_\ast)$ has sufficient symmetry if there exists $R\in\SO(2)\setminus\{\pm \Id\}$ such that $F\in\Fcal_\Mcal(\Omega,R_\ast)$ if and only if $R^TFR \in \Fcal_\Mcal(\Omega,R_\ast)$,	or equivalently, due to $R\Mcal=\Mcal$,
	\begin{align}\label{suff_symm2}
		\Fcal_\Mcal(\Omega, R_\ast) = \Fcal_\Mcal(\Omega,R_\ast)R^T.
	\end{align}		
	\smallskip
	
	$a)$ Let $\Omega_1,\ldots,\Omega_M$ for $M\in\N$ be the boundary grains of $(\Omega,R_\ast)$ and let $J,J'\subset \{1,\ldots,M\}$ be as in Proposition~\ref{prop:taylor_bdr}. 
	If there is $i\in J\cup J'$, then \eqref{taylor_bdr2} in combination with \eqref{suff_symm2} and Lemma \ref{lem:symmetric} yields that
	\begin{align*}
		\Fcal_\Mcal(\Omega,R_\ast) \subset \bigcap_{k=0}^\infty \Ncal R_\ast^T\restrict{\Omega_i} (R^T)^k =  \Bigl(\bigcap_{k=0}^\infty \Ncal (R^k)^T\Bigr) R_\ast^T\restrict{\Omega_i}=  \SO(2).
	\end{align*}
	Since $\SO(2)$ is a trivial inner bound, the polycrystal is fully rigid in the sense that
	\eqref{unrelaxed} can only be solved with affine boundary values in $\SO(2)$. 

	In view of the geometry-independence of the Taylor bound, the arguments above also show that $\Tcal_\Ncal(R_\ast(\Omega))=\SO(2)$ for any polycrystal $(\Omega,R_\ast)$ with sufficient symmetry, even if $J\cup J'=\emptyset$.
	\medskip
	
	 $b)$ A special class of polycrystals with sufficient symmetry is given as follows. Under the assumption that $\Omega$  has center of in the origin, that is, $\int_\Omega x \dd x =0$, suppose that there exists a rotation $R\in\SO(2)\setminus\{\pm \Id\}$ such that
	\begin{align}\label{symmetric_crystal}
		R\Omega = \Omega\quad\text{and}\quad R_\ast(x)=R^TR_\ast(Rx)\quad  \text{for a.e.~}x\in \Omega,
	\end{align}
	where $R\Omega=\{Rx: x\in \Omega\}$. A simple transformation argument shows that~\eqref{suff_symm2} is indeed satisfied. 
	Figure \ref{fig:symmetric_crystal} depicts an easy example of a polycrystal that fulfills \eqref{symmetric_crystal} with $R=R_\frac{\pi}{2}$; 
	here, $J=\{1,2,3,4\}$, which implies full rigidity.
	\begin{figure}
		\centering
		\begin{tikzpicture}[scale=0.85]
			\fill [Cerulean] (0,0) --++ (225:2) arc (225:135:2);
			\fill [Cerulean] (0,0) --++ (-45:2) arc (-45:45:2);
			\fill [Dandelion] (0,0) --++ (45:2) arc (45:135:2);
			\fill [Dandelion] (0,0) --++ (-45:2) arc (-45:-135:2);
			\draw (0,0) --++ (45:2); \draw (0,0) --++ (135:2); 
			\draw (0,0) --++ (-45:2); \draw (0,0) --++ (-135:2); 
			\draw (0,0) circle (2cm);
			
			\draw [<-](-0.3,1.2) --++ (0:0.6);
			\draw [<-](-1.2,-0.3) --++ (90:0.6);
			\draw [->](-0.3,-1.2) --++ (0:0.6);
			\draw [->](1.2,-0.3) --++ (90:0.6);
			
			\draw (-1,2.1) node {$\Omega_1$};
			\draw (-2.1,-1) node {$\Omega_2$};
			\draw (1,-2.1) node {$\Omega_3$};
			\draw (2.1,1) node {$\Omega_4$};
		\end{tikzpicture}
		\caption{A visualization of the polycrystal $(\Omega,R_\ast)$ with $\Omega=B(0,1)$ and $R_\ast= \Id \ONE_{\Omega_1\cup\Omega_3} + R_\frac{\pi}{2}\ONE_{\Omega_2\cup\Omega_4}$.}\label{fig:symmetric_crystal}
	\end{figure}
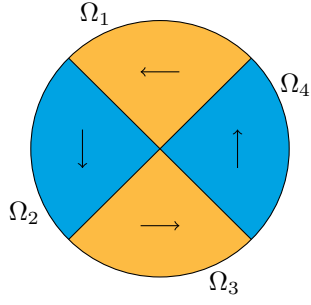
\end{example}

\begin{example}[Bicrystals]\label{ex:bicrystals}
	Let $\Omega=B(0,1)$ and fix slip directions $s,s'\in\Scal^1$ with $s\neq \pm s'$. \smallskip
	
	$a)$ Consider $(\Omega,R_\ast)$ with
	\begin{align*}
		 R_\ast e_1= \begin{cases}
			s  &\text{ in } \Omega_1= B^+_{e_2}(0,1),\\
			s' &\text{ in } \Omega_2= B^-_{e_2}(0,1),
		\end{cases}
	\end{align*}
	see Figure \ref{fig:bicrystal}\,$a)$. Recalling the definitions \eqref{taylor_sets} and \eqref{TdNcal}, Proposition \ref{prop:affine_lipschitz} and Proposition \ref{prop:taylor_bdr}\,$c)$ imply that
	\begin{align*}
		\Ncal_s\cap \Ncal_{s'}=\Tcal_\Ncal(R_\ast(\Omega)) \subset \Fcal_\Mcal(\Omega,R_\ast) \subset\Tcal^\partial_\Ncal(\Omega,R_\ast) = \Ncal_s \cap \Ncal_{s'},
	\end{align*}
	which determines the attainable macroscopic strains of the polycrystal. 
	If $s=e_2$, then $\bdperp$ consists of only a single point in $\partial \Omega_2\cap\bdpoly$ and 
	\begin{align*}
		\Tcal^\perp_\Ncal(\Omega,R_\ast) =\Ncal_{s'} \supsetneq	\Ncal_s\cap\Ncal_{s'} =\Tcal^\partial_\Ncal(\Omega,R_\ast).
	\end{align*}
	
	\begin{figure}[h!]
		\begin{subfigure}{0.49\linewidth}
			\centering
			\begin{tikzpicture}
				\fill [Cerulean] (-1.5,0) to [out=90,in=180] (0,1.5) to [out=0,in=90] (1.5,0) to cycle;
				\fill [Dandelion] (-1.5,0) to [out=-90,in=180] (0,-1.5) to [out=0,in=-90] (1.5,0) to cycle;
				\draw (0,0) circle (1.5);
				\draw (-1.5,0) -- (1.5,0);
				\draw (0,-0.75) --++ (210:0.3);
				\draw [->] (0,-0.75) --++ (30:0.3);
				\draw [->] (0,0.45) --++ (90:0.6);
				\draw (-1.3,1.3) node {$\Omega_1$}; %{$B_{e_2}^+(0,1)$};
				\draw (-1.3,-1.3) node {$\Omega_2$}; %{$B_{e_2}^-(0,1)$};
			\end{tikzpicture}
		\end{subfigure}
		\begin{subfigure}{0.49\linewidth}
			\centering
			\begin{tikzpicture}
				\begin{scope}[rotate=0]
				\coordinate (a) at ({-1.5*cos(20)},{-1.5*sin(20)});
				\coordinate (b) at ({1.5*cos(20)},{-1.5*sin(20)});
				\fill [Cerulean] (-1.5,0) to [out=90,in=180] (0,1.5) to [out=0,in=90] (1.5,0) to [out=-90,in=70] (b) to [out=180,in=0] (a) to [out=110,in=-90] (-1.5,0);
				\fill [Dandelion] (a) to [out=0,in=180] (b) to [out=250,in=0] (0,-1.5) to [out=180,in=-70] (a);
				\draw (0,0) circle (1.5);
				\draw (a) -- (b);
				\draw (0,-1) --++ (210:0.3);
				\draw [->] (0,-1) --++ (30:0.3);
				\draw [->] (0,0) --++ (90:0.6);
				\draw (-1.3,1.3) node {$\Omega_1$};
				\draw (-1.3,-1.3) node {$\Omega_2$};
				\end{scope}
			\end{tikzpicture}
		\end{subfigure}
		\caption{Illustration of a polycrystal as in $a)$ with $s=e_2$ and $s'=R_\frac{\pi}{6}e_1$, and $b)$ with $s=e_2$, $s'=R_\frac{\pi}{6}e_1$ and $\theta=\frac{\pi}{9}$.}\label{fig:bicrystal}
	\end{figure}
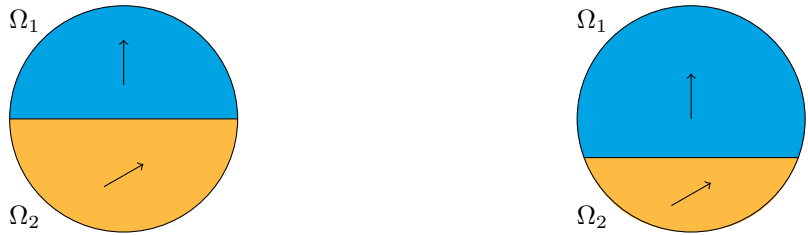

	$b)$ As a second example, let $(\Omega, R_\ast)$ be given by
	\begin{align*}
		R_\ast e_1=
		\begin{cases}
			s  &\text{ in } \Omega_1= \{x\in B(0,1): x_2> -\sin \theta\},\\
			s' &\text{ in } \Omega_2= \{x\in B(0,1): x_2< -\sin \theta\},
		\end{cases}
	\end{align*}
	for  $\theta\in (0,\frac{\pi}{2})$, see Figure \ref{fig:bicrystal}\,$b)$. 
	In this case, Proposition~\ref{prop:taylor_bdr} gives rise to the characterization of $\Fcal_\Mcal(\Omega,R_\ast)$ for 
	 $s'=R_\ffi e_1$ with $|\ffi|<\frac{\pi}{2}-\theta$, namely,
	\begin{align*}
		\Fcal_\Mcal(\Omega,R_\ast) = \Ncal_s \cap \Ncal_{s'}. 
	\end{align*}
\end{example}

\section{On the non-optimality of the Taylor bound}\label{sec:constructions}
We discuss in Example \ref{ex:bicrystals} several instances of polycrystals whose Taylor bound is optimal.
Moreover, Example \ref{ex:symmetric_outer} demonstrates that polycrystals with sufficient symmetry are fully rigid under suitable assumptions.
These results naturally raise the question of whether the Taylor bound is generally optimal. The next proposition provides a negative answer to this issue via the construction of a specific polycrystal.
Our geometric setup is mainly inspired by the rotated-square approach discussed in \cite{CKZ17, Pom10} in the context of stress-free martensitic inclusions in the theory of shape-memory alloys. 

\begin{proposition}\label{prop:optimality}
	There exists a polycrystal $(\Omega,R_\ast)$ such that $\Tcal_\Ncal(R_\ast(\Omega)) \subsetneq \Fcal_\Mcal(\Omega,R_\ast)$.
\end{proposition}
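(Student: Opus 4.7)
The plan is to exhibit one specific polycrystal $(\Omega,R_\ast)$ together with a continuous, finitely piecewise affine map $u\in W^{1,\infty}(\Omega;\R^2)$ solving the relaxed inclusion~\eqref{relaxed} and carrying affine boundary values $u(x)=Fx$ for some matrix $F\notin\Tcal_\Ncal(R_\ast(\Omega))$. Once such a $u$ is in hand, Proposition~\ref{prop:affine_lipschitz} immediately upgrades it to a Lipschitz solution of~\eqref{unrelaxed} with the same boundary datum, so that $F\in\Fcal_\Mcal(\Omega,R_\ast)\setminus\Tcal_\Ncal(R_\ast(\Omega))$, which is the desired strict inclusion.

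For the geometry I would adapt the rotated-square construction of~\cite{CKZ17, Pom10} to a sheared variant. I would take $\Omega$ to be an axis-aligned square centred at the origin and partition it into a central sheared parallelogram $\Omega_0$ together with four boundary grains $\Omega_1,\dots,\Omega_4$, each filling the region between one side of $\Omega_0$ and the corresponding side of $\partial\Omega$. I assign slip directions $R_0 e_1,\dots,R_4 e_1$ to the grains (to be chosen) and declare $u$ to be the piecewise affine map with constant gradient $M_i$ on $\Omega_i$ for $i=0,\dots,4$. Continuity of $u$ and the prescribed affine boundary values then translate, through the classical Hadamard jump condition, into two families of rank-one identities: interior compatibility $M_0-M_i=a_i\otimes\nu_i$ across $\partial\Omega_0\cap\partial\Omega_i$ with unit normal $\nu_i$, and outer compatibility $F-M_i=a_i'\otimes\nu_i'$ across $\partial\Omega_i\cap\partial\Omega$ with unit normal $\nu_i'$. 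Each such identity gives two scalar equations, so that, for fixed geometry, the outer gradients $M_i$ are uniquely determined by the free data $(M_0,F)$.

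What remains is a finite-dimensional feasibility check: to pick the angles of the sheared square, the rotations $R_0,\dots,R_4$, the central gradient $M_0$, and the target $F$ so that (i) the resolved $M_i$'s satisfy the admissibility constraints $\det M_i=1$ and $|M_iR_ie_1|\le 1$ of~\eqref{Ns}, while (ii) $F$ lies outside the Taylor bound $\Tcal_\Ncal(R_\ast(\Omega))$. Step (ii) is easy to test, since by Proposition~\ref{prop:taylorbound} and Remark~\ref{rem:taylor}\,$e)$ it reduces to the explicit quadratic inequalities that cut out $\Lambda_{\theta_n}\cap \Lambda_{\theta_{n+1}}$ in the $(\beta,\gamma)$-parameters of the representation~\eqref{matrices_Ncal_s}. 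Concretely, I would start from a symmetric reference with $M_0=\Id$, $F=\Id$ and four rotations $R_i$ placed so that the Taylor bound is non-trivial but sharply constrained near $\SO(2)$ (ensured through Corollary~\ref{cor:trivial_taylorbound}), and then perturb $F$ along a shear direction that violates the Taylor inequality, rebalancing the $M_i$'s accordingly through the linear Hadamard relations.

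The main obstacle is precisely the tension between (i) and (ii): the outer jumps $a_i'\otimes\nu_i'$ must be large enough to drag $F$ outside the rigid intersection $\bigcap_k\Ncal R_k^T$, while the interior jumps $a_i\otimes\nu_i$ must remain small enough for each $M_i$ to keep $|M_iR_ie_1|\le 1$. Because the slip constraint in $\Ncal_s$ is a single quadratic inequality rather than a polytope, there is hope that this balance can be met on an open set of parameters, but verifying it explicitly demands careful tuning of the shear angle of $\Omega_0$ against the four texture angles, aligning each $a_i'$ with a direction along which the corresponding $\Ncal R_i^T$ is locally flat near the identity. I expect the verification to boil down, via Lemma~\ref{lem:rank-one}, to a handful of scalar inequalities in one or two angular parameters, which can be satisfied on an open region; this quantitative tuning is the delicate heart of the construction.
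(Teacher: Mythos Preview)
Your overall strategy---build a continuous, finitely piecewise affine solution of the relaxed problem~\eqref{relaxed} with affine boundary data outside the Taylor bound, then invoke Proposition~\ref{prop:affine_lipschitz}---is exactly the paper's. The specific construction you propose, however, collapses.

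With an axis-aligned square $\Omega$ and a central parallelogram $\Omega_0$ whose four vertices sit on the four sides of $\Omega$, each of the four boundary pieces $\Omega_i$ is a corner triangle with \emph{two} edges on $\partial\Omega$, carrying two linearly independent outer normals from $\{\pm e_1,\pm e_2\}$. Since $u$ is affine on $\Omega_i$ and equals $Fx$ along two non-parallel boundary segments, necessarily $M_i=F$ for every $i$. The interior compatibility $M_0-M_i=a_i\otimes\nu_i$ then reads $M_0-F=a_i\otimes\nu_i$ for the two distinct edge normals of the parallelogram, forcing $M_0=F$. Your map is therefore globally affine, and the boundary values you can reach are precisely $\Tcal_\Ncal(R_\ast(\Omega))$---the set you are trying to escape. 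If instead you shrink $\Omega_0$ so that each $\Omega_i$ is a quadrilateral touching only one side of $\partial\Omega$, adjacent pieces $\Omega_i,\Omega_{i+1}$ now share an interior edge, and the additional rank-one constraints $M_i-M_{i+1}=b_i\otimes\mu_i$ (which your count of ``two families of identities'' omits) again overdetermine the system with only a single central piece.

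The paper sidesteps both issues by two choices you did not make. First, it uses only \emph{two} orthogonal slip directions $e_1,e_2$, so that Corollary~\ref{cor:trivial_taylorbound} gives a trivial Taylor bound $\Tcal_\Ncal(R_\ast(\Omega))=\SO(2)$; your step~(ii) then reduces to exhibiting \emph{any} non-rotation $F$, eliminating the delicate balancing you anticipate. Second, it takes a \emph{rotated} outer square (so that no outer normal is $\pm e_1$ or $\pm e_2$, and $\bdperp=\emptyset$) partitioned into \emph{nine} affine pieces---a central square $S$ and eight triangles $T_1,\dots,T_8$. Only $T_2,T_4,T_6,T_8$ meet $\partial\Omega$, each along a single edge, so none is forced to equal $F$; the five interior pieces supply the degrees of freedom needed to meet all the determinant and slip constraints. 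The construction is carried out explicitly: $\nabla v\restrict{S}$ is a shear by $\gamma$, the remaining gradients are determined by extending the images of the edges of $S$, and one checks directly that everything lands in $\Ncal_{e_1}\cup\Ncal_{e_2}$ for $\gamma\in[1-\sqrt{3},\sqrt{3}-1]$, with boundary value $F_\gamma\notin\SO(2)$ whenever $\gamma\neq 0$.
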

\begin{proof}
	Let us start with a brief overview of our strategy for finding an explicit example of a polycrystal with non-optimal Taylor bound. 
	To keep the arguments simple, we aim for a polycrystal $(\Omega,R_\ast)$ with the two orthogonal slip directions $e_1$ and $e_2$, which guarantees a trivial Taylor bound according to Corollary \ref{cor:trivial_taylorbound}, i.e., $\Tcal_\Ncal(R_\ast(\Omega))=\SO(2)$.
	Necessarily, our task is then to determine an $F\in \Fcal_\Mcal(\Omega,R_\ast)\setminus \SO(2)$.
	As a first step, we construct a finitely piecewise affine solution to the homogeneous partial differential inclusion
	\begin{align}\label{hom_relaxed}
		\begin{cases}
			\nabla v \in \Ncal_{e_1} \cup \Ncal_{e_2} &\text{ a.e.~in } \Omega,\\
			v = Fx &\text{ on } \partial\Omega
		\end{cases}
	\end{align}
	for a suitable $F\notin \SO(2)$.	
	The non-empty connected components of the sets $\{\nabla v \in \Ncal_{e_1}\}$ and $\{\nabla v \in \Ncal_{e_2}\}$ are chosen as the polycrystalline grains $(\Omega_i)_i$, whose orientations are set accordingly to be $e_1$ and $e_2$; 
	on $\{\nabla v \in \SO(2)\}$, one may select either of these two orientations.
	This procedure provides a finitely piecewise affine solution to \eqref{relaxed}. 
	In the final step, we apply Proposition \ref{prop:affine_lipschitz} to obtain a Lipschitz solution to \eqref{unrelaxed} with the same affine boundary condition.
	\medskip
	
	Now, let $\Omega\subset \R^2$ be the square with corners $(0,0), (1,3), (4,2), (3,-1)$. 
	Observe that the specific choice of $\Omega$ yields that the outer unit normal $\partial\Omega$ never attains the values $e_1$ or $e_2$ so that $\Tcal^\perp_\Ncal(\Omega,R_\ast)=\Sl(2)$.
	Similarly to \cite{CKZ17, Pom10}, we subdivide $\Omega$ into eight triangles $T_1,...,T_8$ and a square $S$, which represent the pieces where the solution $v$ to the homogeneous problem \eqref{hom_relaxed} will be affine. 
	Precisely,
	\begin{align}\label{triangles_square}
		T_1 &= \{(0,0),(2,0),(1,1)\}^{\rm co},\quad &&T_2= \{(0,0),(1,1),(1,3)\}^{\rm co},\nonumber\\
		T_3 &= \{(1,1),(1,3),(2,2)\}^{\rm co},\quad &&T_4= \{(1,3),(2,2),(4,2)\}^{\rm co},\nonumber\\
		T_5 &= \{(2,2),(4,2),(3,1)\}^{\rm co},\quad &&T_6= \{(4,2),(3,1),(3,-1)\}^{\rm co},\\
		T_7 &= \{(2,0),(3,1),(3,-1)\}^{\rm co},\quad &&T_8= \{(0,0),(2,0),(3,-1)\}^{\rm co},\nonumber\\
		S&=\{(1,1),(2,2),(3,1),(2,0)\}^{\rm co},\nonumber
	\end{align}
	where $(\cdot)^{\rm co}$ denotes the classical convex hull, see Figure \ref{fig:layout}.
	We stress that the area of these subsets satisfy $|T_1|=\ldots = |T_8|=\frac{1}{2}|S|$ and do not represent the grains of the polycrystal.
	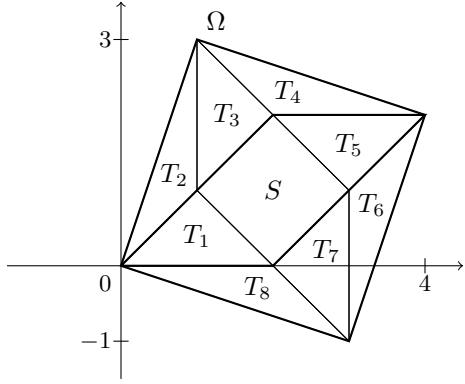
\begin{figure}[h!]
		\centering
		\begin{tikzpicture}
			%Coordinate system
			\draw [->](-1.5,0)--(4.5,0);
			\draw [->](0,-1.5)--(0,3.5);
			\draw (-0.1,-1)--(0.1,-1);
			\draw (0,-1) node [anchor=east] {\small $-1$};
			\draw (-0.1,3)--(0.1,3);
			\draw (0,3) node [anchor=east] {\small $3$};
			\draw (4,-0.1) -- (4,0.1);
			\draw (4,0) node [anchor=north] {\small $4$};
			\draw (0,0) node [anchor=north east] {\small $0$};

			\draw [smooth, line join=round](1,1)--++(1,1)--++(1,-1)--++(-1,-1)--cycle; //S
			\draw (2,1) node {$S$};
	
			\draw (1,1) --++ (1,1)--++(-1,1)--cycle; //T1
			\draw (2,0)--++(1,-1)--++(0,2)--cycle; //T5
			\draw (1,0.4) node {$T_1$};
			\draw (3,1.6) node {$T_5$};
			
			\draw [smooth, line join=round](0,0) --(1,1)--++(0,2)--(0,0); //T2
			\draw [smooth, line join=round](3,-1)--++(1,3)--++(-1,-1)--cycle; //T6
			\draw (0.7,1.2) node {$T_2$};
			\draw (3.3,0.8) node {$T_6$};
			
			\draw [smooth, line join=round](0,0)--(2,0)--(1,1)--(0,0); //T3
			\draw [smooth, line join=round](2,2)--++(2,0)--++(-1,-1)--cycle; //T7
			\draw (1.4,2) node {$T_3$};
			\draw (2.7,0.2) node {$T_7$};
			
			\draw [smooth, line join=round](2,2)--++(2,0)--++(-3,1)--cycle; //T4
			\draw [smooth, line join=round](2,0)--++(1,-1)--(0,0)--(2,0); //T8
			\draw (2.2,2.3) node {$T_4$};
			\draw (1.8,-0.3) node {$T_8$};
			
			\draw (1,3) node [anchor = south west] {$\Omega$};
			\draw [thick](0,0) --++(1,3)--++(3,-1)--++(-1,-3)--cycle;
			\draw [thick](0,0) --++(2,2)--++(2,0)--++(-2,-2)--++(-2,0);
		\end{tikzpicture}
		\caption{The domain $\Omega$ and its partition into the eight triangles $T_1,\ldots,T_8$ and the square $S$ in the center.}\label{fig:layout}
	\end{figure}
	
	Starting the construction of the solution $v$ to \eqref{hom_relaxed}, we set 
	\begin{align}\label{S}
		\nabla v\restrict{S} = \begin{pmatrix} 1 & \gamma \\ 0 & 1\end{pmatrix} \in \Mcal_{e_1}\subset \Ncal_{e_1},
	\end{align}
	which means that $S$ undergoes a shear in $e_1$-direction with shear parameter $\gamma\in \R$ to be specified later. 

	Next, we explain how to construct the gradients in $T_1$ and $T_5$.  
	Inspired by the point-symmetry of the partition \eqref{triangles_square} of $\Omega$, we apply affine deformations with identical strain on both these sets.
	To obtain a continuous deformation, the gradients of $v$ need to be rank-one compatible along the interfaces of $S$ and the other triangles;
	in particular,  
	\begin{align}\label{33}
		\nabla v\restrict{T_1} (e_1 - e_2) = \nabla v\restrict{S} (e_1-e_2) = \nabla v\restrict{T_5}(e_1 - e_2)=\begin{pmatrix}1-\gamma\\-1\end{pmatrix},
	\end{align}
	which prescribes the affine deformation $v$ on $T_1$ and $T_5$ along $e_1-e_2$ up to translation.
	To fully pin down the construction, we need to designate $v$ along another linearly independent direction. 
	A natural choice for the latter are $e_1$ and $e_1+e_2$  since they are parallel to the other edges of $T_1$ and $T_5$.
	Considering that $\Ncal_{e_1}\subset \Sl(2)$ necessitates that the restriction of $v$ to these two sets needs to be incompressible.	
	Our approach to finding such a locally volume-preserving deformation works via extension of the images of the edges of $S$ under $v$ in a way that gives rise to 
	\begin{align*}
		 \nabla v \restrict{T_5}e_1 = \nabla v\restrict{T_1}e_1 = \frac{1}{2}\nabla v\restrict{S}(e_1+e_2)= \frac{1}{2}\begin{pmatrix} 1+\gamma\\ 1\end{pmatrix}. 
	\end{align*}
	Hence, together with~\eqref{33},
	\begin{align}\label{T15}
		\nabla v \restrict{T_5} = \nabla v \restrict{T_1} = \frac{1}{2}\begin{pmatrix}1+\gamma & 3\gamma-1 \\1 & 3 \end{pmatrix}.
	\end{align}
	If $\gamma\in [-1-\sqrt{3}, \sqrt{3}-1]$, then the right-hand side is contained in $\Ncal_{e_1}$.
	The same strategy applied to $T_3$ and $T_7$ yields that
	\begin{align}\label{T37}
		\nabla v \restrict{T_3} = \nabla v \restrict{T_7} = \frac{1}{2}\begin{pmatrix}3+\gamma & \gamma-1\\1 & 1\end{pmatrix} \in \Ncal_{e_2},
	\end{align}
	if $\gamma\in[1-\sqrt{3},1+\sqrt{3}]$; hence, we take $\gamma\in [-\sqrt{3}+1, \sqrt{3}-1]$ from now on. 
	
	Having $v$ fixed on the triangles $T_1,T_5, T_3,$ and $T_7$, the sought finitely piecewise affine solution to~\eqref{hom_relaxed} is automatically determined on all of $\Omega$. 
	Indeed, the rank-one compatibility along interfaces combined with the constructions \eqref{T15} and \eqref{T37} require that
	\begin{align}\label{T2468}
		\nabla v \restrict{T_6} = \nabla v \restrict{T_2} = \frac{1}{2}\begin{pmatrix}1+3\gamma & \gamma -1\\ 3 & 1\end{pmatrix}\in\Ncal_{e_2} \quad\text{and}\quad 
		\nabla v \restrict{T_8} = \nabla v \restrict{T_4} = \frac{1}{2}\begin{pmatrix}1+\gamma &-3+\gamma \\ 1 & 1\end{pmatrix}\in\Ncal_{e_1}.
	\end{align}

	We now define the polycrystal $(\Omega, R_\ast)$ as follows: In light of \eqref{S}-\eqref{T2468}, let the grains of the polycrystal be
	\begin{align*}
		\Omega_1 ={\rm int}(T_2\cup T_3),\quad \Omega_2={\rm int}(T_6\cup T_7), \quad \Omega_3={\rm int}(T_1\cup T_5\cup T_4\cup T_8\cup S),
	\end{align*}
	and let the orientations be given by
	\begin{align}\label{texture_example}
		R_\ast &= \Id\ONE_{\Omega_3} + R_{\frac{\pi}{2}}\ONE_{\Omega_1\cup\Omega_2}.
	\end{align}
	\medskip	

	Overall, the procedure above produces a finitely piecewise affine solution $v$ to the relaxed problem \eqref{relaxed} for the polycrystal $(\Omega,R_\ast)$ with texture $R_\ast$ as in \eqref{texture_example} and the boundary value $v = F_\gamma x$ on $\partial\Omega$ with
	\begin{align*}
		F_\gamma =\frac{1}{5}\begin{pmatrix} 3\gamma + 4 & 4\gamma - 3 \\ 3 & 4 \end{pmatrix} \quad \text{for }\gamma\in [-\sqrt{3}+1, \sqrt{3}-1], 
	\end{align*}
	see Figure \ref{fig:sheared_square1} for illustration. 
	Note that $F_\gamma\in\SO(2)$ if and only if $\gamma=0$, so that, in combination with Proposition~\ref{prop:affine_lipschitz}, any $\gamma\in \R$ with $0<|\gamma|<\sqrt{3}-1$ gives rise to a solution of~\eqref{unrelaxed} with $F\notin \SO(2)$. 
	This shows that $\Tcal_{\Ncal}(R_\ast(\Omega))=\SO(2)\subsetneq \Fcal_{\Mcal}(\Omega, R_\ast)$ and concludes the proof.
	\end{proof}
	
	\begin{figure}[h!]
		\centering
		\begin{tikzpicture}
			\begin{scope}[shift={(-7,1)}]
				\draw [fill=Dandelion, smooth, line join=round, very thin](1,1)--++(1,1)--++(1,-1)--++(-1,-1)--cycle; //S
		
				\draw [fill=Cerulean, very thin](1,1) --++ (1,1)--++(-1,1)--cycle; //T1
				\draw [fill=Cerulean, very thin](2,0)--++(1,-1)--++(0,2)--cycle; //T5
				\draw [thick, ->] (2.6,-0.2) --++(90:0.4);
				\draw [thick, ->](1.4,1.8)--++(90:0.4);
				\draw [thick, ->](1.8,1) --++(0:0.4); 
				
				\draw [fill=Cerulean, smooth, line join=round, very thin](0,0) --(1,1)--++(0,2)--(0,0); //T2
				\draw [fill=Cerulean, smooth, line join=round, very thin](3,-1)--++(1,3)--++(-1,-1)--cycle; //T6
				
				\draw [fill=Dandelion, smooth, line join=round, very thin](0,0)--(2,0)--(1,1)--(0,0); //T3
				\draw [fill=Dandelion, smooth, line join=round, very thin](2,2)--++(2,0)--++(-1,-1)--cycle; //T7
				
				\draw [fill=Dandelion, smooth, line join=round, very thin](2,2)--++(2,0)--++(-3,1)--cycle; //T4
				\draw [fill=Dandelion, smooth, line join=round, very thin](2,0)--++(1,-1)--(0,0)--(2,0); //T8
				\draw (1,3) node [anchor = south west] {$\Omega$};
				\draw [thick] (0,0)--++(1,3)--++(3,-1)--++(-1,-3)--cycle;
				\draw [thick] (0,0)--++(2,2)--++(-1,1);
				\draw [thick] (3,-1)--++(-1,1)--++(2,2);
				
				\draw [thick, ->](4.5,2.5) to [out=45,in=135] (6.5,2.5);
				\draw (5.5,3.25) node {$v$};
			\end{scope}
			\begin{scope}
				\draw [fill=Dandelion, smooth, line join=round, very thin](1,2)--++(1.5,1)--++(0.5,-1)--++(-1.5,-1)--cycle;//S
					
				\draw [fill=Dandelion, smooth, line join=round, very thin](1,2)--++(0.5,-1)--++(-1.5,-1)--cycle; //T1
				\draw [fill=Dandelion, smooth, line join=round, very thin](2.5,3)--++(1.5,1)--++(-1,-2)--cycle; //T5
				
				\draw [fill=Cerulean, smooth, line join=round, very thin](0,0)--++(0.5,3)--+(0.5,-1)--cycle; //T2
				\draw [fill=Cerulean, smooth, line join=round, very thin](3,2)--++(1,2)--++(-0.5,-3)--cycle; //T6
				
				\draw [fill=Cerulean, smooth, line join=round, very thin](1,2)--++(-0.5,1)--++(2,0)--cycle; //T3
				\draw [fill=Cerulean, smooth, line join=round, very thin](3,2)--++(0.5,-1)--++(-2,0)--cycle; //T7
				
				\draw [fill=Dandelion, smooth, line join=round, very thin](0.5,3)--++(3.5,1)--++(-1.5,-1)--cycle; //T4
				\draw [fill=Dandelion, smooth, line join=round, very thin](0,0)--++(1.5,1)--++(2,0)--cycle; //T8
				
				\draw [thick](0,0)--++(0.5,3)--++(3.5,1)--++(-0.5,-3)--cycle;
				\draw [thick](0,0)--++(1,2)--++(1.5,1)--++(-2,0);
				\draw [thick](3.5,1)--++(-2,0)--++(1.5,1)--++(1,2);
				\draw (1,3.5) node {$v(\Omega)$};
			\end{scope}
		\end{tikzpicture}
		\caption{The finitely piecewise affine solution $v$ to \eqref{relaxed} with $F=F_{\frac{1}{2}}$. The orange area describes the grain with slip direction $e_1$, while the blue ones represent the grains with slip direction $e_2$.}\label{fig:sheared_square1}
	\end{figure}
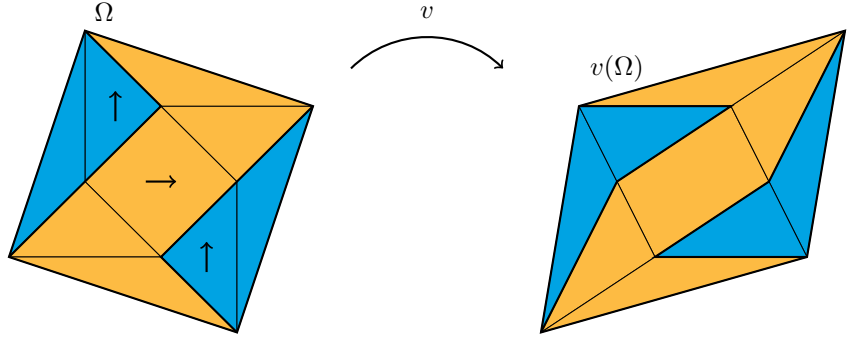

\begin{remark}[Comparison with the rotated-square construction]
	While our geometric framework  is similar to that of \cite{CKZ17, Pom10}, the overall design strategy is different, given that we need to accommodate here the homogeneous differential inclusion \eqref{hom_relaxed} with a non-trivial affine boundary condition. 
	
	Unlike the rotated-square construction of~\cite{CKZ17, Pom10}, our sheared-square construction converts any additional rotation of the center square into a rotation of the surrounding triangles $T_1,\ldots,T_8$ by the same angle, considering that the vertices of $\Omega$ are not fixed.
	The boundary of $\Omega$ is thus rotated in the same way.
	More precisely, replacing $\nabla v\restrict{S}$ by $R\nabla v\restrict{S}$ for a rotation $R\in\SO(2)$ yields the boundary value $RF_\gamma$ instead of $F_\gamma$.\medskip
\end{remark}

\section*{Appendix}
\renewcommand{\thesection}{A}
\stepcounter{section}
\renewcommand{\theequation}{A.\arabic{equation}}
\setcounter{equation}{0}

\begin{lemma}\label{lem:random_crystals_app}
	Let $k\in \N$ and $\Tcal_k=\big \{(\theta_1,\ldots,\theta_{k}) \in (0,\pi)^k : \textstyle\bigcap_{i=1}^{k} \Ncal R_{\theta_i}^T \cap \Ncal = \SO(2)\big\}$.  Then it holds for 
	the $k$-dimensional Lebesgue measure of $\Tcal_k$ that
	\begin{align}\label{probability}
		\lambda_k(\Tcal_k) = \pi^k\Bigl(1 - \frac{k+1}{2^k}\Bigr). 
	\end{align}
\end{lemma}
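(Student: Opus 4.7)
The plan is to rewrite the condition defining $\Tcal_k$ purely in terms of two order statistics of $(\theta_1,\ldots,\theta_k)$ (the largest $\theta_i$ not exceeding $\pi/2$ and the smallest $\theta_i$ not below $\pi/2$), then partition $(0,\pi)^k$ according to which side of $\pi/2$ each $\theta_i$ lies on, and finally evaluate a Beta-type integral on each piece.

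First, I observe that the intersection defining $\Tcal_k$ depends only on the set $\{0,\theta_1,\ldots,\theta_k\}\subset [0,\pi)$, so Corollary~\ref{cor:trivial_taylorbound} applies directly with $N=k+1$ and base angle $\theta_1=0$. Up to a null set, I may assume the $\theta_i$ are pairwise distinct and distinct from $\pi/2$. Setting
\[
 M:=\max\bigl(\{0\}\cup\{\theta_i:\theta_i<\tfrac{\pi}{2}\}\bigr),\qquad m:=\min\{\theta_i:\theta_i>\tfrac{\pi}{2}\},
\]
the criterion becomes: $(\theta_1,\ldots,\theta_k)\in\Tcal_k$ if and only if $m$ exists and $m-M\leq\pi/2$.

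Second, for each $I\subset\{1,\ldots,k\}$ with $|I|=j$, let $E_I=\{\theta\in(0,\pi)^k:\theta_i<\pi/2\Leftrightarrow i\in I\}$; these sets partition $(0,\pi)^k$ up to a null set, and $|E_I|=(\pi/2)^k$. On $E_I$ with $1\leq j\leq k-1$, the coordinates $(\theta_i)_{i\in I}$ and $(\theta_i)_{i\notin I}$ are uniformly distributed on $(0,\pi/2)^{j}$ and $(\pi/2,\pi)^{k-j}$ independently, so $M$ is the max of $j$ i.i.d.\ uniforms on $(0,\pi/2)$ and $m-\pi/2$ is the min of $k-j$ i.i.d.\ uniforms on $(0,\pi/2)$. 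After the substitution $u=2M/\pi$, $v=2(m-\pi/2)/\pi$, a standard order-statistics computation gives
\[
 \frac{|E_I\cap \Tcal_k|}{(\pi/2)^k}=\int_0^1\! j\,u^{j-1}\bigl(1-(1-u)^{k-j}\bigr)\,du=1-j\,B(j,k-j+1)=1-\binom{k}{j}^{-1}.
\]
For the extreme cases $j=0$ and $j=k$, the set $E_I\cap\Tcal_k$ is empty: when $j=k$ no $\theta_i$ is $\geq\pi/2$, and when $j=0$ one has $M=0$ and $m>\pi/2=M+\pi/2$.

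Summing the contributions over all $I$ (with multiplicity $\binom{k}{j}$ for each $j$) yields
\[
 \lambda_k(\Tcal_k)=(\pi/2)^k\sum_{j=1}^{k-1}\Bigl(\binom{k}{j}-1\Bigr)=(\pi/2)^k\bigl((2^k-2)-(k-1)\bigr)=\pi^k\Bigl(1-\frac{k+1}{2^k}\Bigr),
\]
as claimed in~\eqref{probability}. The only subtle point is the correct handling of the boundary angle $0$ in the definition of $M$; once one notices that $j\geq 1$ makes the $0$ irrelevant in the maximum, the proof reduces to the elementary Beta integral above.
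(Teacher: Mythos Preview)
Your proof is correct and follows essentially the same approach as the paper: both invoke Corollary~\ref{cor:trivial_taylorbound} to reduce the membership criterion to a gap condition between the largest angle below $\pi/2$ and the smallest one above, and both partition $(0,\pi)^k$ according to how many coordinates fall on each side of $\pi/2$. The only organizational difference is that the paper computes the measure of the complement $\Tcal_k^c$, showing that each slice $T_{k,l}$ (for $l=0,\ldots,k$) has the uniform measure $(\pi/2)^k$ via a direct iterated integral, whereas you compute $\lambda_k(\Tcal_k)$ itself on each $E_I$ using order-statistics densities and the Beta integral $j\,B(j,k-j+1)=\binom{k}{j}^{-1}$; these are complementary calculations yielding the same arithmetic at the end.
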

\begin{proof} 
	For integers $1\leq j\leq i$, let $\Sigma_{i,j}$ be the set of all injective functions $\sigma : \{1,\ldots,j\}\to\{1,\ldots,i\}$ and observe that $\Sigma_{i,j}$ consists of exactly $\frac{i!}{(i-j)!}$ elements; also, set $\Sigma_{i,0}=\emptyset$.
	
	We establish \eqref{probability} by computing the measure of the complement $\Tcal_k^c$ of $\Tcal_k$ in $(0, \pi)^k$.
	In view of Corollary \ref{cor:trivial_taylorbound}, $\Tcal_k^c$ can be expressed as the disjoint union 
	\begin{align}\label{partition_probability}
		\Tcal_k^c = \bigcup_{l=0}^{k} T_{k,l} \cup N,
	\end{align}
	where $N\subset \R^k$ is a set of zero $k$-dimensional Lebesgue measure, and
	\begin{align}\label{T_k,l}
		\begin{split}T_{k,l} := \{&(\theta_1,\ldots,\theta_{k})\in(0,\pi)^k : \theta_i<\tfrac{\pi}{2} \text{ and } \theta_j>\tfrac{\pi}{2} \text{ with } \theta_j - \theta_i >\tfrac{\pi}{2}\\
		&\text{ for all }  i\in\sigma(\{1,\ldots,l\}), j\in\{1,\ldots,k\}\setminus\sigma(\{1,\ldots,l\})\text{ and all $\sigma\in\Sigma_{k,l}$}\}\end{split}
	\end{align}
	for $l\in\{0,\ldots, k\}$. 
	Note that $T_{k,0}$ and $T_{k,k}$ correspond to the cases $\theta_{i}>\frac{\pi}{2}$ and $\theta_{i}<\frac{\pi}{2}$ for all $i\in \{1,\ldots,k\}$, respectively.

	To calculate $\lambda_k(\Tcal_{k}^c)$, it suffices to determine the measures of $T_{k,l}$ as in \eqref{T_k,l} and exploit \eqref{partition_probability}. 
	It is easy to see that
	\begin{align}\label{boundarycases}
		\lambda_k(T_{k,0}) = \lambda_k(T_{k,k}) = \Bigl(\frac{\pi}{2}\Bigr)^k.
	\end{align}
		For $l\in\{1,\ldots,k-1\}$, one obtains that
	\begin{align}\label{innercases}
		\lambda_k(T_{k,l}) &= {{k}\choose{l}} \int_{(0,\frac{\pi}{2})^{l}}\int_{\bigl(\frac{\pi}{2} + \max\{\theta_1,\ldots,\theta_{l}\},\pi\bigr)^{k-l}} \dd(\theta_{l+1},\ldots \theta_{k}) \dd(\theta_1,\ldots,\theta_{l})\nonumber\\
		&=	{{k}\choose{l}} \int_{(0,\frac{\pi}{2})^{l}}\bigl(\tfrac{\pi}{2} - \max\{\theta_1,\ldots,\theta_{l}\}\bigr)^{k-l}\dd(\theta_1,\ldots,\theta_{l})\nonumber\\
		&= {{k}\choose{l}} l! \int_0^{\tfrac{\pi}{2}}\int_0^{\theta_{l}}\ldots \int_0^{\theta_{2}} \big(\tfrac{\pi}{2} - \theta_l\big)^{k-l} \dd \theta_1 \ldots \dd \theta_{l-1} \dd \theta_l\nonumber\\
		&= {{k}\choose{l}} l \int_0^{\tfrac{\pi}{2}}\big(\tfrac{\pi}{2} - \theta_l\big)^{k-l}\theta_l^{l-1}\dd \theta_l= {{k}\choose{l}} l \frac{(l-1)!(k-l)!}{k!}\Bigl(\frac{\pi}{2}\Bigr)^k =\Bigl( \frac{\pi}{2}\Bigl)^k,
	\end{align}
	where the third equality uses that $(0,\frac{\pi}{2})^l$ can (up to a null set) be split disjointly into $l!$ sets with equal measure 
	\begin{align*}
		\bigcup_{\sigma\in\Sigma_{l,l}} \{(\theta_1,\ldots,\theta_l) \in (0, \tfrac{\pi}{2})^l: \theta_{\sigma(1)}< \ldots < \theta_{\sigma(l)}\};
	\end{align*}
	moreover, the last integral in~\eqref{innercases} is solved via integration by parts applied $(k-l)$ times. 
	
	The desired identity \eqref{probability} then follows from \eqref{partition_probability}, in view of \eqref{boundarycases} and \eqref{innercases}.
\end{proof}

\begin{lemma}\label{lem:symmetric}
	It holds for any $R\in\SO(2)\setminus\{\pm \Id\}$ that
	\begin{align*}
		\bigcap_{k=0}^\infty \Ncal (R^k)^T=\SO(2).
	\end{align*}
\end{lemma}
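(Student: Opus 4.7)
The inclusion $\SO(2) \subset \bigcap_{k \geq 0} \Ncal (R^k)^T$ is immediate, since $\SO(2) \subset \Ncal$ (rotations satisfy $\det = 1$ and $|Re_1|=1$) and $R^k \in \SO(2)$ for every $k$. For the converse, the plan is to fix $F$ in the intersection and work with the symmetric positive definite matrix $M := F^T F$, which satisfies $\det M = 1$ and
\begin{align*}
(R^k e_1)^T M (R^k e_1) = |F R^k e_1|^2 \leq 1 \qquad \text{for every } k \geq 0.
\end{align*}
Showing that these conditions force $M = \Id$ will, together with $\det F = 1$, give $F \in \SO(2)$ and complete the proof.

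I would then split the argument according to the order of $R$ in $\SO(2)$. If $R$ has infinite order, the orbit $\{R^k e_1\}_{k \geq 0}$ is dense in $\Scal^1$, so by continuity of the quadratic form $v \mapsto v^T M v$ one obtains $v^T M v \leq 1$ on all of $\Scal^1$. The maximum of this form on the unit circle equals the largest eigenvalue $\mu_1$ of $M$; combining $\mu_1 \leq 1$ with $\mu_1 \mu_2 = \det M = 1$ and $\mu_1 \geq \mu_2 > 0$ forces $\mu_1 = \mu_2 = 1$, hence $M = \Id$.

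If instead $R$ has finite order $n$, the assumption $R \neq \pm\Id$ forces $n \geq 3$. The vectors $R^k e_1$ for $k = 0, \ldots, n-1$ then form a regular polygon on $\Scal^1$, and a short Fourier computation based on $\sum_{k=0}^{n-1} e^{4\pi i k/n}=0$ yields the identity $\tfrac{1}{n}\sum_{k=0}^{n-1} (R^k e_1)(R^k e_1)^T = \tfrac{1}{2}\Id$. Multiplying by $M$, taking the trace, and averaging the pointwise bounds then produces $\trace M \leq 2$. Combined with the AM-GM estimate $\trace M \geq 2\sqrt{\det M} = 2$ for the eigenvalues of the positive definite matrix $M$, equality must hold throughout, the eigenvalues of $M$ must coincide, and again $M = \Id$. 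The only conceptually interesting point is the dichotomy itself: a pure density argument does not see the finite-order case, whereas the averaging identity relies on the orbit being a finite group; but together the two cases exhaust all admissible $R$.
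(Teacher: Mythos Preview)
Your proof is correct and takes a genuinely different route from the paper's. The paper writes $R=R_\varphi$ with $\varphi\in(0,\pi)$, considers the angles $\theta_j=j\varphi-\lfloor j\varphi/\pi\rfloor\pi$, and then locates two indices $k,l$ with $\theta_k<\tfrac{\pi}{2}\leq\theta_l$ and $\theta_l-\theta_k\leq\tfrac{\pi}{2}$; this reduces the claim to the earlier Corollary~\ref{cor:trivial_taylorbound}, which characterizes exactly when a finite intersection $\Ncal\cap\Ncal R_{\theta_k}^T\cap\Ncal R_{\theta_l}^T$ collapses to $\SO(2)$. Your argument, by contrast, never touches the specific geometry of the sets $\Ncal_s$ beyond the defining inequality $|Fs|\leq 1$: passing to the quadratic form $M=F^TF$ and exploiting either density of the orbit (infinite order) or the averaging identity $\tfrac{1}{n}\sum_k (R^ke_1)(R^ke_1)^T=\tfrac{1}{2}\Id$ (finite order $n\geq 3$) gives $\trace M\leq 2$, and the AM--GM bound with $\det M=1$ forces $M=\Id$. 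This is more elementary and entirely self-contained, whereas the paper's approach ties the lemma back into the structural results of Section~\ref{sec:taylor} and thereby keeps the argument uniform with the rest of the analysis. One small remark: your Fourier identity $\sum_{k=0}^{n-1}e^{4\pi i k/n}=0$ is literally written for the generator $R_{2\pi/n}$, but since any $R$ of order $n$ produces the \emph{same} regular $n$-gon as orbit (just in permuted order), the averaging identity is unaffected.
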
 

\begin{proof}
	In light of Remark~\ref{rem:first_angle} a), one may assume that
	$R=R_\ffi$ with $\ffi\in (0,\pi)$.  We set
	\begin{align*}
		 \theta_j=j\ffi  - \lfloor \tfrac{j\ffi}{\pi} \rfloor  \pi \quad \in [0, \pi)
	\end{align*}
	for any $j\in\N$, where $\lfloor t \rfloor$ denotes the largest integer below $t\in \R$.
	If one can find $k, l\in \N$ such that
	\begin{align}\label{bar_tilde_theta}
		0\leq \theta_k<\theta_l<\pi \quad\text{and}\quad \frac{\pi}{2}\in[\theta_k, \theta_l]\quad \text{and}\quad \theta_l -  \theta_k \leq \frac{\pi}{2},
	\end{align}
	then the statement follows immediately from
	\begin{align*}
		\SO(2)\subset  \bigcap_{k=0}^\infty \Ncal (R^k)^T \subset \Ncal\cap \Ncal R^T_{\theta_k}\cap \Ncal R^T_{\theta_l} = \SO(2),
	\end{align*}
	where the last identity is a consequence of Corollary~\ref{cor:trivial_taylorbound}. 

	To see~\eqref{bar_tilde_theta}, let us write $(0, \pi)$ as the disjoint union 
	\begin{align*}
		(0,\pi) = \bigcup_{m=2}^\infty I_m \cup (0,\tfrac{\pi}{2})\quad \text{with} \quad I_m:= \big[(1-\tfrac{1}{2^{m-1}})\pi,(1-\tfrac{1}{2^{m}})\pi\big). 
	\end{align*}
	If $\ffi\in (0,\frac{\pi}{2})$, we take $k=\lfloor\frac{\pi}{2\varphi}\rfloor$ and $l=\lfloor\frac{\pi}{2\varphi}\rfloor+1$, observing that $0<\theta_k=\lfloor \frac{\pi}{2\varphi}\rfloor\varphi< \frac{\pi}{2} <(\lfloor \frac{\pi}{2\varphi}\rfloor+1)\varphi =\theta_l<\pi$ and $\theta_l-\theta_k=\varphi<\frac{\pi}{2}$. 
	For $\ffi\in I_m$ with $m\geq 2$, let $l=2^{m-2}$ and $k=2^{m-1}$. 
	Then, $\theta_k=2^{m-1}\varphi - (2^{m-1}-1)\pi \in [0,\frac{\pi}{2})$ and $\theta_l = 2^{m-2}\ffi - (2^{m-2}-1)\pi\in[\frac{\pi}{2}, \frac{3}{4}\pi)$ with
	\begin{align*}
		\theta_l - \theta_k = 2^{m-2}(\pi-\ffi) \leq \frac{2^{m-2}}{2^{m-1}}\pi = \frac{\pi}{2}.
	\end{align*}
\end{proof}

%%%%%%%%%%%%%%%%%%%% ACKNOWLEDGEMENTS %%%%%%%%%%%%%%%%%%%%%%%%%%
\section*{Acknowledgments}
The authors would like to thank John Ball for sharing helpful insight into the generalized Hadamard jump condition. 
CK acknowledges the support from the Dutch Research Council (NWO) through the project TOP2.17.01. Most of this work was done while CK was affiliated with Utrecht University and supported by the Westerdijk Fellowship program.

%%%%%%%%%%%%%%%%%%%%%%%%%%%% BIBLIOGRAPHY %%%%%%%%%%%%%%%%%%%%%%%%%%%%%%%%%%%

\bibliographystyle{abbrv}
\bibliography{polycrystals}
\end{document}